\theoremstyle{plain}
\newtheorem{theo}{Theorem}[section]
\newtheorem{prop}[theo]{Proposition}
\newtheorem{lem}[theo]{Lemma}
\theoremstyle{remark}
\newtheorem{defi}[theo]{\rm Definition}
\newtheorem{rem}[theo]{\rm Remark}
\begin{document}

\title[Localization of Dirac Operators]
{Localization of Dirac Operators on $4n+2$ Dimensional Open spin$^c$ Manifolds}
\author{Shin Hayashi}

\address{Graduate School of Mathematical Sciences, University of Tokyo, 3-8-1 Komaba,
Tokyo, 153-8914, Japan.}

\email{{\tt
hayashi@ms.u-tokyo.ac.jp}}

\keywords{Clifford algebra, spin$^c$ structure, index on open manifold.}

\subjclass[2010]{Primary 19K56; Secondary 15A66.}

\begin{abstract}
An integer valued topological index of a Dirac operator is introduced for a pair of a $4n+2$ dimensional open spin$^c$ manifold and a section of the determinant line bundle satisfying some property.
We show a relation between the index and an index of a Dirac operator of its characteristic submanifold, by a localization of a $K$-class. 
\end{abstract}

\maketitle

\tableofcontents

\section{Introduction}

C. Callias studied an index of a Dirac operator on an euclidean space by adding some hermitian operator \cite{Ca78}.
On the other hand, E. Witten gives the localization argument of the de Rham operator $d + d^{\ast}$ \cite{Wi82}, 
both of which are closely related.

A relation between an index of a Dirac operator on a closed spin$^c$ manifold and that of naturally induced spin structure on its characteristic submanifold had been studied by W. Zhang, J. Fast, S. Ochanine, M. Furuta, Y. Kametani \cite{Zh93}, \cite{FO04}, \cite{FK00}.

In this note, we define an integer valued {\em topological index} of a Dirac operator on a $4n+2$ dimensional spin$^c$ manifold, which is not necessarily closed, when a compactly supported section of a determinant line bundle is given (Definition \ref{teigi}), and prove a formula between the index and an index of its characteristic submanifold (Theorem \ref{bb} and Theorem \ref{bc}).

Fast and Ochanine proved the formula for an even dimensional closed spin$^c$ manifold \cite{FO04}.
Their method is to embed the manifold into a sufficiently large euclidean space, and to use the fact that the Thom class of its normal bundle localizes to a neighborhood of its characteristic submanifold when we forget the complex structure and regard it as a $KO$-class. 
Furuta and Kametani, in \cite{FK00}, proved the formula for a $4n$ dimensional compact spin$^c$ manifold with group action and spin boundary.

In contrast, we prove the formula for a $4n+2$ dimensional spin$^c$ manifold which is not necessarily closed (subsection $4.1$).
Our result can be also applied for a $4n+2$ dimensional spin$^c$ manifold with spin boundary.
In this sense, our result complements the result of Furuta and Kametani \cite{FK00}.
It would be possible to generalize Fast and Ochanine's result to open and equivariant cases. 
In this note, however, we give an alternating proof. 
Our result treats only $4n+2$ dimensional spin$^c$ manifolds, though we does not use the embedding and give the proof by localizing the Dirac operator on the manifold and not by expanding the $K$-class. 

Form the perspective of localization, it is desirable to give the formulation of our index not only on closed spin$^c$ manifolds but also on open spin$^c$ manifolds.
We give in section $3$ a definition of an index of a Dirac operator on a $4n+2$ dimensional spin$^c$ manifold, which is not necessarily closed, with the structure we mentioned above.
The key observation is that a section of the determinant line bundle localizes the Dirac operator. 
From this, the sufficient condition for the elliptic boundary condition can be given by using a section of the determinant line bundle.
The additional structure we introduce for the definition of our index gives the elliptic boundary condition.
Our index is twice as the classical one when a spin$^c$ manifold is closed (Remark \ref{3023}).

We also give a short proof of the formula for a $4n+2$ dimensional closed spin$^c$ manifold by using the index formula (subsection $4.2$).
This proof is simple, however it is difficult to prove, in this way, the case when the manifold is open, or the manifold has a group action.
On the other hand, the localization method can easily be generalized to the both cases. 

It will be an interesting problem to show the formula as a relation between $K$ and $KO$-theory, like Fast and Ochanine did, by a localization on the manifold, which does not use an embedding.

This paper is organized as follows.
In section $2$ we summarize some facts about Clifford algebras and its representations.
In section $3$ we give a definition of an index of a Dirac operator on a $4n+2$ dimensional spin$^c$ manifold which is not necessarily closed when some additional structure is given.
We prove in this section that this additional structure can be given by a section of its determinant line bundle.
In section $4$ we prove the formula. In subsection $4.1$ we give the proof by $K$-theory, and in subsection $4.2$ by an index formula.

\section{Preliminaries}
In this section, some facts of Clifford algebras and its representations are summarized.
In this note, we regard real and quaternionic structures (sometimes complex structures also) as, in some sense, one of the structures of Clifford algebras.
Note that the structures we will work on have a period 8. This is because of the periodicity of real Clifford algebras.

We denote $\mathit{Cl}_{n,m}$ as an indefinite Clifford algebra\footnote[1]{We call this algebra simply as {\em Clifford algebra} and permits the adjective {\em indefinite}.}
generated by $n+m$ elements $e_1, \ldots , e_n$, whose squares are $-1$ and $\epsilon_1, \ldots , \epsilon_m$, whose squares are $1$.
Then we define $\mathbb{C}\mathit{l}_n := \mathit{Cl}_{n,0} \otimes \mathbb{C}$ to be a complex Clifford algebra.
Let $S_{2n}$ be a representation space of the irreducible representation of $\mathbb{C}\mathit{l}_{2n}$, 
and $\rho_{2n,0}$ be the representation of $\mathit{Cl}_{2n,0}$ on $S_{2n}$.
We consider a spin$^c$ group $Spin^c_{2n}$ as a subgroup of $\mathbb{C}\mathit{l}_{2n}$ and
denote by $\Delta^{Spin^c}$ the spinor representation of $Spin^c_{2n}$ on $S_{2n}$.
Note that for $\lambda$ $=$ $[\mu, u]$ $\in Spin^c_{2n}$ $\bigl(\mu \in Spin_{2n}, u \in U(1)\bigl)$, 
the relation $\Delta^{Spin^c}(\lambda) = \rho_{2n,0}(\mu)u$ holds. 
For a $\mathbb{Z}_2$-graded vector space $V=V^0 \oplus V^1$, we write $(-1)^{\mathrm{deg}}$ for its $\mathbb{Z}_2$-grading, i.e., $(-1)^{\mathrm{deg}}$ is the operator which is 1 on $V^0$ and $-1$ on $V^1$.
A $\mathbb{Z}_2$-grading of $S_{2n}$ is given by the action of $i^n \rho_{2n,0}(e_1 \cdots e_{2n})$.
We denote $\hat{\otimes}$ for $\mathbb{Z}_2$-graded tensor products, and $\otimes$ for ungraded tensor products.

The following facts of $S_{8n}$ and $S_{8n+4}$ are well known (c.f. \cite{Fr00}).
We will use them to study properties of $S_{8n+2}$ and $S_{8n+6}$.
\begin{lem}
\label{202}
\begin{enumerate}
\item $S_{8n}$ has a real structure  $J \in \mathrm{End}_{\mathbb{R}}(S_{8n})$ which anti-commutes with Clifford multiplication, 
	i.e., anti-linear and $J^2=1$.
\item $S_{8n+4}$ has a quaternionic structure  $J \in \mathrm{End}_{\mathbb{R}}(S_{8n+4})$ which anti-commutes with Clifford multiplication, 
	i.e., anti-linear and $J^2=-1$.
\end{enumerate}
\noindent
In both cases $J$ commutes with $(-1)^{\mathrm{deg}}$ and is an even degree function.
\end{lem}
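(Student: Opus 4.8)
The plan is to reduce both parts to the classification of real Clifford algebras, after translating ``anti-commutes with Clifford multiplication'' into ``commutes with a real subalgebra''. Inside $\mathbb{C}\mathit{l}_{2n}$ the elements $ie_1,\dots,ie_{2n}$ square to $+1$, hence generate a copy of $\mathit{Cl}_{0,2n}$, and one has $\mathbb{C}\mathit{l}_{2n}=\mathit{Cl}_{0,2n}\otimes_{\mathbb{R}}\mathbb{C}$. For an anti-linear $J\in\mathrm{End}_{\mathbb{R}}(S_{2n})$, the relation $J\rho_{2n,0}(e_j)=-\rho_{2n,0}(e_j)J$ for every $j$ is equivalent to $J$ commuting with the $\mathit{Cl}_{0,2n}$-action in which the $j$-th generator acts by $i\rho_{2n,0}(e_j)$: the sign picked up by moving $J$ past the scalar $i$ cancels the one picked up by moving $J$ past $e_j$. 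So it suffices to produce a real structure ($J^{2}=1$) on $S_{8n}$, and a quaternionic structure ($J^{2}=-1$) on $S_{8n+4}$, each commuting with the relevant $\mathit{Cl}_{0,\bullet}$-action.

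For this I would invoke the $8$-periodicity $\mathit{Cl}_{0,k+8}\cong\mathit{Cl}_{0,k}\otimes\mathrm{Mat}_{16}(\mathbb{R})$ together with $\mathit{Cl}_{0,0}=\mathbb{R}$ and $\mathit{Cl}_{0,4}=\mathrm{Mat}_{2}(\mathbb{H})$ (see \cite{Fr00}), which give $\mathit{Cl}_{0,8n}\cong\mathrm{Mat}_{2^{4n}}(\mathbb{R})$ and $\mathit{Cl}_{0,8n+4}\cong\mathrm{Mat}_{2^{4n+1}}(\mathbb{H})$. In the first case the unique irreducible real module is $\mathbb{R}^{2^{4n}}$; the $\mathit{Cl}_{0,8n}$-action extends naturally to a $\mathbb{C}\mathit{l}_{8n}$-action on $\mathbb{R}^{2^{4n}}\otimes_{\mathbb{R}}\mathbb{C}$, which is an irreducible complex module of complex dimension $2^{4n}=\dim_{\mathbb{C}}S_{8n}$, hence isomorphic to $S_{8n}$; transporting the complex conjugation $v\otimes z\mapsto v\otimes\bar z$ gives $J$ with $J^{2}=1$ commuting with the $\mathit{Cl}_{0,8n}$-action. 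In the second case the unique irreducible real module of $\mathit{Cl}_{0,8n+4}$ is $\mathbb{H}^{2^{4n+1}}$; its right $\mathbb{H}$-action commutes with the left Clifford action, so fixing a unit imaginary quaternion $q$, right multiplication by $q$ is a complex structure making $\mathbb{H}^{2^{4n+1}}$ a $\mathbb{C}\mathit{l}_{8n+4}$-module of real dimension $2^{4n+3}=\dim_{\mathbb{R}}S_{8n+4}$, irreducible by simplicity of $\mathbb{C}\mathit{l}_{8n+4}$ and this dimension count, hence $\cong S_{8n+4}$; right multiplication by a unit imaginary quaternion $q'$ with $q'q=-qq'$ is then conjugate-linear, squares to $-1$, and commutes with the left Clifford action, so it is the required $J$. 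This proves (1) and (2).

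For the grading statement I would compute directly. By the displayed formula, the $\mathbb{Z}_{2}$-grading operator is $\rho_{8n,0}(e_1\cdots e_{8n})$ on $S_{8n}$ (since $i^{4n}=1$) and $-\rho_{8n+4,0}(e_1\cdots e_{8n+4})$ on $S_{8n+4}$ (since $i^{4n+2}=-1$); in each case it is a real scalar times $\rho(e_1\cdots e_m)$ with $m\in\{8n,8n+4\}$ even. Since $J$ anti-commutes with each of the $m$ Clifford generators, $J\,\rho(e_1\cdots e_m)=(-1)^{m}\rho(e_1\cdots e_m)\,J=\rho(e_1\cdots e_m)\,J$, and being anti-linear $J$ commutes with the real scalar $\pm1$; hence $J$ commutes with $(-1)^{\mathrm{deg}}$, which is exactly the assertion that $J$ preserves the splitting $S=S^{0}\oplus S^{1}$, i.e.\ is of even degree.

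The only slightly delicate point I anticipate is bookkeeping in the quaternionic case: keeping the left Clifford action and the right $\mathbb{H}$-action apart, and checking that right multiplication by $q'$ is conjugate-linear for the complex structure given by right multiplication by $q$ (this is precisely the content of $q'q=-qq'$). Once the identification $S_{8n+4}\cong\mathbb{H}^{2^{4n+1}}$ is set up, everything else is routine. An alternative that avoids the classification is to build $J$ inductively from explicit real and quaternionic structures on $S_{8}$ and $S_{4}$, using $S_{8(n+1)}\cong S_{8n}\,\hat{\otimes}\,S_{8}$ and the behaviour of such structures under graded tensor products; the argument above, however, is shorter to present.
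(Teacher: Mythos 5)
Your argument is correct, and it is worth noting that the paper itself offers no proof of this lemma: it is stated as well known with a reference to \cite{Fr00}, so there is no internal argument to compare against. Your reduction is the standard one and it is carried out accurately: for an anti-linear $J$, anti-commuting with each $\rho_{2n,0}(e_j)$ is indeed equivalent to commuting with the positive-definite real Clifford algebra generated by the elements $i\rho_{2n,0}(e_j)$, and with the paper's sign convention this algebra is $\mathit{Cl}_{0,2n}$, whose type ($\mathbb{R}(2^{4n})$ in dimension $8n$, $\mathbb{H}(2^{4n+1})$ in dimension $8n+4$) delivers exactly the anti-commuting real, respectively quaternionic, structure with the stated square; the dimension counts identifying the complexified real module and the $\mathbb{H}$-module with $S_{8n}$ and $S_{8n+4}$ are right, and the quaternionic bookkeeping ($R_{q'}R_q=R_{qq'}=-R_qR_{q'}$, $R_{q'}^2=-1$, commutation with the left matrix action) works as you indicate. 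Your grading computation also matches the paper's convention $(-1)^{\mathrm{deg}}=i^{n}\rho_{2n,0}(e_1\cdots e_{2n})$: since $J$ anti-commutes with an even number of generators and commutes with real scalars, it commutes with the grading operator, i.e.\ is even. The alternative you mention (explicit structures on $S_4$, $S_8$ propagated through graded tensor products) is closer in spirit to how the paper later uses $J$ in Step 2 of Proposition \ref{d}, where $J$ appears tensored against low-dimensional factors, but the classification argument you give is complete and entirely adequate for the statement as written.
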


Now we set some notations.
In $8n+2$ dimensional and $8n+6$ dimensional cases, the following constructions are almost the same, but a little bit different.
This difference derives from the difference between real and quaternionic structures.
In order to treat each case in a same line, we use letters $\mathit{Cl}_0$, $\mathit{Cl}_1$, $\mathit{Cl}_2$, $\mathit{Cl}_3$, $S$, $\rho$ and  $\tau$  for 
$\mathit{Cl}_{8n+2,0}$, $\mathit{Cl}_{8n+2,1}$, $\mathit{Cl}_{8n+2,2}$, $\mathit{Cl}_{8n+2,3}$, $S_{8n+2}$, $\rho_{8n+2,0}$ and $0$ in the $8n+2$ dimensional case,
and for
$\mathit{Cl}_{8n+6,0}$, $\mathit{Cl}_{8n+6,1}$, $\mathit{Cl}_{8n+7,1}$, $\mathit{Cl}_{8n+8,1}$, $S_{8n+6}$, 
$\rho_{8n+6,0}$ and $1$ in the $8n+6$ dimensional case (see the table below).
\vspace{-3mm}
\begin{table}[h]
  \begin{tabular}{|c||c|c|c|c|c|c|c|}
\hline
dimension & $\mathit{Cl}_0$ & $\mathit{Cl}_1$ & $\mathit{Cl}_2$ & $\mathit{Cl}_3$ & $S$ & $\rho$ & $\tau$ \\ \hline \hline
$8n+2$ & $\mathit{Cl}_{8n+2,0}$ & $\mathit{Cl}_{8n+2,1}$ & $\mathit{Cl}_{8n+2,2}$ & $\mathit{Cl}_{8n+2,3}$ & $S_{8n+2}$ &  $\rho_{8n+2,0}$ & $0$\\ \hline
$8n+6$ & $\mathit{Cl}_{8n+6,0}$ & $\mathit{Cl}_{8n+6,1}$ & $\mathit{Cl}_{8n+7,1}$ & $\mathit{Cl}_{8n+8,1}$ & $S_{8n+6}$ &  $\rho_{8n+6,0}$ & $1$\\ \hline
  \end{tabular}
\end{table}
\vspace{-3mm}
\begin{lem}
There is an isomorphism of algebras
\begin{equation*}
\label{last}
\mathit{Cl}_{8n+2+4\tau,1} \cong \mathit{Cl}_{8n+4\tau,0} \underset{\mathbb{R}}{\otimes} \mathit{Cl}_{2,1}.
\end{equation*}
\end{lem}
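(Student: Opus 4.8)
The plan is to produce the isomorphism explicitly, by exhibiting inside $\mathit{Cl}_{8n+4\tau,0}\otimes_{\mathbb{R}}\mathit{Cl}_{2,1}$ a family of elements satisfying exactly the defining relations of $\mathit{Cl}_{8n+2+4\tau,1}$, and then invoking the universal property of Clifford algebras together with a dimension count. Write $m=8n+4\tau$; the role of the hypothesis is that $4\tau\in\{0,4\}$, so $m$ is even and $m\equiv 0\pmod 4$. Let $f_1,\dots,f_m$ be the standard generators of $\mathit{Cl}_{8n+4\tau,0}$ (so $f_i^2=-1$ and $f_if_j=-f_jf_i$ for $i\ne j$) and $e_1,e_2,\epsilon$ those of $\mathit{Cl}_{2,1}$ (so $e_1^2=e_2^2=-1$, $\epsilon^2=1$, pairwise anticommuting), and set $\omega=f_1f_2\cdots f_m$. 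The first step is to record the two facts $\omega^2=1$ and $\omega f_i=-f_i\omega$ for all $i$; both follow from the standard formula $(f_1\cdots f_m)^2=(-1)^{m(m-1)/2}\prod_i f_i^2$ and a sign count, the first equality using $m\equiv 0\pmod 4$ and the second only that $m$ is even.

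Next I would propose the $m+3$ elements $g_i:=f_i\otimes\epsilon$ for $1\le i\le m$, $g_{m+1}:=1\otimes e_1$, $g_{m+2}:=1\otimes e_2$, and $h:=\omega\otimes\epsilon$. Using $\epsilon^2=1$, $\omega^2=1$, $\omega f_i=-f_i\omega$ and the anticommutation relations among the $f$'s and among $e_1,e_2,\epsilon$, a routine check shows that $g_1,\dots,g_{m+2},h$ pairwise anticommute, that $g_k^2=-1$ for $1\le k\le m+2$, and that $h^2=\omega^2\otimes\epsilon^2=1$. Thus these elements are candidates for the standard generators of $\mathit{Cl}_{m+2,1}=\mathit{Cl}_{8n+2+4\tau,1}$, and the universal property of Clifford algebras yields an $\mathbb{R}$-algebra homomorphism $\Phi\colon\mathit{Cl}_{8n+2+4\tau,1}\to\mathit{Cl}_{8n+4\tau,0}\otimes_{\mathbb{R}}\mathit{Cl}_{2,1}$ sending the $m+2$ generators of square $-1$ to $g_1,\dots,g_{m+2}$ and the generator of square $+1$ to $h$.

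It then remains to see that $\Phi$ is an isomorphism. For surjectivity I would observe that the image contains $g_ig_j=f_if_j\otimes 1$ and $hg_i=\omega f_i\otimes 1$, hence every product of an even, and every product of an odd, number of the $f_i$'s tensored with $1$, so $\mathit{Cl}_{8n+4\tau,0}\otimes 1$ lies in the image; then $g_i(f_i\otimes 1)=-\,1\otimes\epsilon$ recovers $1\otimes\epsilon$, while $1\otimes e_1=g_{m+1}$ and $1\otimes e_2=g_{m+2}$ are already present, and $\mathit{Cl}_{8n+4\tau,0}$ together with $e_1,e_2,\epsilon$ generate the whole tensor product. Finally $\dim_{\mathbb{R}}\mathit{Cl}_{8n+2+4\tau,1}=2^{8n+4\tau+3}=\dim_{\mathbb{R}}\bigl(\mathit{Cl}_{8n+4\tau,0}\otimes_{\mathbb{R}}\mathit{Cl}_{2,1}\bigr)$, so a surjection between these algebras is automatically an isomorphism.

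The only genuinely delicate point is the sign in $h^2$: one needs $\omega^2=+1$ (so that $h$ is a generator of square $+1$ rather than $-1$), and this is exactly where $m\equiv 0\pmod 4$ is used — for $m\equiv 2\pmod 4$ one would instead get $\omega^2=-1$ and hence $\mathit{Cl}_{m+3,0}$ — so the restriction to dimensions $8n+2$ and $8n+6$, i.e. $\tau\in\{0,1\}$, enters essentially; everything else is bookkeeping with Clifford relations. (Alternatively one can assemble the same isomorphism from the standard identities $\mathit{Cl}_{r,s}\otimes_{\mathbb{R}}\mathit{Cl}_{1,1}\cong\mathit{Cl}_{r+1,s+1}$ and $\mathit{Cl}_{m+1,0}\cong\mathbb{C}\mathit{l}_m$ valid for $m\equiv 0\pmod 4$, but the direct construction above is more self-contained.)
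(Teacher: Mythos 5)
Your proposal is correct and follows essentially the same route as the paper: both exhibit explicit elements of $\mathit{Cl}_{8n+4\tau,0}\otimes_{\mathbb{R}}\mathit{Cl}_{2,1}$ satisfying the defining relations of $\mathit{Cl}_{8n+2+4\tau,1}$ and conclude via the universal property, the paper using the assignment $e_i\mapsto e_i'\otimes 1$, $e_{8n+4\tau+j}\mapsto(-1)^{\mathrm{deg}}\otimes e_j''$, $\epsilon_1\mapsto(-1)^{\mathrm{deg}}\otimes\epsilon_1''$, where its $(-1)^{\mathrm{deg}}$ plays exactly the role of your volume element $\omega$, squaring to $+1$ because $8n+4\tau\equiv 0\pmod 4$. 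Your choice of generators differs only cosmetically, and you additionally write out the relation check, surjectivity, and dimension count that the paper leaves implicit.
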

\vspace{-3mm}
\begin{proof}
We write $e_1, \ldots, e_{8n+2+4\tau}, \epsilon_1$ for the generators of $\mathit{Cl}_{8n+2+4\tau,1}$, $e'_1, \ldots, e'_{8n+4\tau}$ for $\mathit{Cl}_{8n+4\tau,0}$, and $e''_1, e''_2, \epsilon''_1$ for $\mathit{Cl}_{2,1}$. The isomorphism is given by mapping $e_i$ to $e'_i \otimes 1$ $(i=1, \ldots, 8n+4\tau)$, $e_{8n+4\tau+j}$ to $(-1)^{\mathrm{deg}}\otimes e''_j$ $(j=1,2)$, and $\epsilon_1$ to $(-1)^{\mathrm{deg}}\otimes \epsilon''_1$.
\end{proof}
\vspace{-2mm}
\begin{prop}
\label{d}
There is a real representation $\tilde{\rho}$ of $\mathit{Cl}_3$ on $S$ which restricts to the complex representation $\rho$ of $\mathit{Cl}_0$ on $S$.
\end{prop}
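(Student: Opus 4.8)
I would argue via the universal property of Clifford algebras. Putting $\tilde\rho(e_i):=\rho(e_i)$, it is enough to assign to each of the remaining generators of $\mathit{Cl}_3$ an $\mathbb{R}$-linear endomorphism of $S$ with the prescribed square, anticommuting with every $\rho(e_i)$ and with the images chosen for the other remaining generators; then $\tilde\rho$ extends uniquely to an $\mathbb{R}$-algebra homomorphism $\mathit{Cl}_3\to\mathrm{End}_{\mathbb{R}}(S)$, and it restricts to $\rho$ on $\mathit{Cl}_0$ by construction. The remaining generators are $\epsilon_1,\epsilon_2,\epsilon_3$ (each squaring to $1$) when $\tau=0$ and $e_{8n+7},e_{8n+8},\epsilon_1$ (squaring to $-1,-1,1$) when $\tau=1$; so in either case I must exhibit three mutually anticommuting $\mathbb{R}$-linear operators on $S$, each anticommuting with every $\rho(e_i)$, one squaring to $1$ and the other two squaring to $1$ (if $\tau=0$) or to $-1$ (if $\tau=1$).

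I would build these from three operators on $S$: the grading $\omega:=(-1)^{\mathrm{deg}}=i^{4n+1+2\tau}\rho(e_1\cdots e_{8n+2+4\tau})$, which is $\mathbb{C}$-linear, satisfies $\omega^2=1$, and anticommutes with every $\rho(e_i)$; multiplication $I$ by $i$, which is $\mathbb{C}$-linear, satisfies $I^2=-1$, and commutes with every $\rho(e_i)$ and with $\omega$; and an anti-linear operator $J$ on $S$ with $J\rho(e_i)=\rho(e_i)J$, $J\omega=-\omega J$, and $J^2=-1$ when $\tau=0$, $J^2=1$ when $\tau=1$. Granting such a $J$, the operators $\omega$, $\omega J$ and $\omega IJ$ are $\mathbb{R}$-linear, each anticommutes with every $\rho(e_i)$ (since $\omega$ does while $I$ and $J$ commute with the $\rho(e_i)$), and a short computation using $\omega^2=1$, $J\omega=-\omega J$, $I\omega=\omega I$ and $IJ=-JI$ shows that their squares are $1$, $-J^2$ and $-J^2$, and that they anticommute with one another. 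Hence setting $\tilde\rho(\epsilon_1)=\omega$, $\tilde\rho(\epsilon_2)=\omega J$, $\tilde\rho(\epsilon_3)=\omega IJ$ when $\tau=0$, and $\tilde\rho(\epsilon_1)=\omega$, $\tilde\rho(e_{8n+7})=\omega J$, $\tilde\rho(e_{8n+8})=\omega IJ$ when $\tau=1$, realizes exactly the relations of $\mathit{Cl}_3$.

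What is left, and is the real content, is to produce such a $J$. For this I would transfer the real structure (when $\tau=0$) or the quaternionic structure (when $\tau=1$) of Lemma \ref{202} on $S_{8n+4\tau}$ to $S=S_{8n+2+4\tau}$ by means of the isomorphism $\mathit{Cl}_{8n+2+4\tau,1}\cong\mathit{Cl}_{8n+4\tau,0}\otimes_{\mathbb{R}}\mathit{Cl}_{2,1}$ of the preceding lemma, together with the identification $\mathbb{C}\mathit{l}_{8n+2+4\tau}\cong\mathit{Cl}_{8n+2+4\tau,1}$ (the complex unit going to a central square root of $-1$), which makes $S$ a module over $\mathit{Cl}_{8n+4\tau,0}\otimes_{\mathbb{R}}\mathit{Cl}_{2,1}$. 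Concretely: compose the Lemma \ref{202} structure of $S_{8n+4\tau}$ with $(-1)^{\mathrm{deg}}$ so that it commutes with $\rho_{8n+4\tau,0}$, pair it with the quaternionic structure $J_V$ (with $J_V^2=-1$) coming from $\mathit{Cl}_{2,0}\cong\mathbb{H}$ inside $\mathit{Cl}_{2,1}$, and let $J$ be the resulting anti-linear operator on $S$; since $J_V$ is quaternionic this flips the type, so $J^2=-1$ for $\tau=0$ and $J^2=1$ for $\tau=1$. The main obstacle is precisely to check that the $J$ assembled this way commutes with all of $\rho(e_i)$ — which forces one to keep track of the $(-1)^{\mathrm{deg}}$-twist in the preceding lemma — anticommutes with $\omega$, and, above all, has $J^2$ with the asserted sign; this last point is the statement that $S$ is of quaternionic type when $\tau=0$ and of real type when $\tau=1$. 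It is a finite but delicate sign computation, and it is where the care lies; everything else is the universal property together with the Clifford identities among $\omega$, $I$ and $J$ recorded above.
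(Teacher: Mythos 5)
Your proposal is sound and, at the structural level, runs parallel to the paper's proof: both reduce everything to the existence of the extra generators via the decomposition of Lemma \ref{last} together with the real/quaternionic structure of Lemma \ref{202}. Where you differ is in the packaging. The paper works with an explicit model: it first builds a representation $\rho_{2,3}$ of $\mathit{Cl}_{2,3}$ on $\Lambda^{\ast}_{\mathbb{R}}V$ (Step 1), and then, under the isomorphism $S\cong S_{8n+4\tau}\hat\otimes S_2$, simply writes down the extra operators as $J(-1)^{\mathrm{deg}}\otimes\rho_{2,3}(\epsilon_2)$ and $J(-1)^{\mathrm{deg}}\otimes\rho_{2,3}(\epsilon_3)$, reading off all signs from the concrete formulas. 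You instead isolate a single anti-linear $J$ with $J^2=(-1)^{\tau+1}$, $J\rho(e_i)=\rho(e_i)J$, $J\omega=-\omega J$, and generate the three operators algebraically as $\omega$, $\omega J$, $\omega IJ$. That reduction is clean, and the Clifford-algebra bookkeeping you carry out for $\omega,\omega J,\omega IJ$ (signs of squares, mutual anticommutation, anticommutation with $\rho(e_i)$) is correct. What your approach buys is a conceptual restatement: the proposition is equivalent to the fact that the irreducible graded $\mathbb{C}\mathit{l}_{8n+2+4\tau}$-module admits an odd-degree anti-linear automorphism whose square has the right sign, i.e., to the quaternionic type of $\mathit{Cl}_{8n+2,0}$ and the real type of $\mathit{Cl}_{8n+6,0}$.

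The place where you should still do work is precisely the one you flag: the construction and verification of $J$. Your sketch is on the right track — take $J_0$ from Lemma \ref{202} on $S_{8n+4\tau}$, twist by $(-1)^{\mathrm{deg}}$ so it commutes with $\rho_{8n+4\tau,0}$, and tensor with an anti-linear operator $J_V$ on $S_2$ — but two points need to be pinned down. First, you need $J_V$ to \emph{commute} with the $\mathit{Cl}_{2,0}$ action and \emph{anticommute} with the grading of $S_2$; the operator $\rho_{2,3}(\epsilon_2)$ from the paper's Step 1 does the opposite on both counts, so you actually want something like $J_V=(-1)^{\mathrm{deg}}_{S_2}\rho_{2,3}(\epsilon_2)$, which then has $J_V^2=-1$ as you need. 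Second, you need to check that the resulting $J=(J_0(-1)^{\mathrm{deg}})\otimes J_V$ really is anti-linear on the $\mathbb{C}$-graded tensor product and commutes with the generators $(-1)^{\mathrm{deg}}\otimes\rho_{2,1}(e''_j)$ coming through the isomorphism of Lemma \ref{last} (this is where the grading twist enters nontrivially). Both checks go through, and once they are written out your argument is complete; but as stated the proposal defers exactly the computation that the paper carries out explicitly in its Step 1, so you should either reproduce that model or do the sign-chasing you allude to rather than leaving it as an assertion.
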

\begin{proof}
The proof is divided into two steps.

\noindent
\underline{\bf{Step 1}.}
Let $V = \mathbb{C}=\mathbb{R}^2$ with standard hermitian metric.
We construct a real representation $\rho_{2,3}$ of $\mathit{Cl}_{2,3}$ on $\Lambda_{\mathbb{R}}^{\ast}V$ which restricts to $\rho_{2,0}$.
Let us take a canonical bases $e_1$, $e_2$ of $V=\mathbb{R}^2$, then $e_1$ and $e_2$ generates the algebra $\mathit{Cl}_{2,0}$.
Let $e_1$, $e_2$, $\epsilon_1$, $\epsilon_2$, $\epsilon_3$ be the generators of $\mathit{Cl}_{2,3}$.
We define a complex structure of $\Lambda_{\mathbb{R}}^{\ast}V$ and the representation $\rho_{2,3}$ of $\mathit{Cl}_{2,3}$ on $\Lambda_{\mathbb{R}}^{\ast}V$ by
$$
\left\{
\begin{aligned}
\rho_{2,3}(e_k) &= e_k^{\wedge} - e_k^{\lrcorner} \quad (k=1, 2),\\
\rho_{2,3}(\epsilon_1) &= (-1)^{\mathrm{deg}},\\
\rho_{2,3}(\epsilon_k) &= e_{k-1}^{\wedge} + e_{k-1}^{\lrcorner} \quad (k=2, 3),\\
i \hspace{5mm} &= -\rho_{2,3}(e_1 e_2 \epsilon_1),
\end{aligned}
\right.
$$
where the interior product is defined by the metric naturally induced by the hermitian metric.
Note that 
$\rho_{2,3}(e_k)$, $\rho_{2,3}(\epsilon_1)$ are $\mathbb{C}$-linear, 
and $\rho_{2,3}(\epsilon_k)$ are anti-linear,
and $\rho_{2,3}$ restricts to $\rho_{2,0}$.

\noindent
\underline{\bf{Step 2}.}
Next we consider the other cases.
Under the isomorphism of Lemma \ref{last}, the representation of $\mathit{Cl}_{8n+2+4\tau,1}$
on $S \cong S_{8n+4\tau} \hat{\otimes} S_{2}$ is given by $\rho_0 \otimes 1 + (-1)^{\mathrm{deg}} \otimes \rho_{2,1}$, and actions of the other two generators are given by
$J(-1)^{\mathrm{deg}} \otimes \rho_{2,3}(\epsilon_2)$ and $ J(-1)^{\mathrm{deg}} \otimes \rho_{2,3}(\epsilon_3) 
	\in
		\mathrm{End}_{\mathbb{R}} ( S_{8n+4\tau}\hat{\otimes} S_{2} )$, 
and thus we have the representation.
\end{proof}
\begin{defi}
\label{gg}
A representation $\mathrm{Ad}\otimes z^2$ of $Spin_{8n+2+4\tau}^c$ on $\mathit{Cl}_3$
is defined as follows.
For any $\lambda 
		= 
			[ \mu , u ] \in Spin_{8n+2+4\tau}^c 
		 ( \mu \in Spin_{8n+2+4\tau} ,  u \in U(1))$
and generators $e_1, \ldots, e_{8n+2+4\tau}$, $\epsilon_1$, $\eta_1$, $\eta_2$ of $\mathit{Cl}_3$
(where $\eta_k^2=(-1)^{\tau+1}$),
we set
\begin{equation*}
\begin{cases}
	\mathrm{Ad}\otimes z^2(\lambda, e_i)= \mu e_i \mu^{-1}, &\\
	\mathrm{Ad}\otimes z^2(\lambda, \epsilon_1) =\mu \epsilon_1 \mu^{-1} =\epsilon_1, &\\
	\mathrm{Ad}\otimes z^2(\lambda, \eta_k) = u^2 \eta_k \quad (k=1, 2),& \\
\end{cases}
\end{equation*}
where we define an action of $i$ on $\eta_1$ and $\eta_2$ by 
$i \cdot \eta_1 := \eta_2$ and $i \cdot \eta_2 := -\eta_1$.
\end{defi}
The following two lemmas can easily be checked.
\begin{lem}
\label{mmm}
For any $\lambda \in Spin^{c}_{8n+2+4\tau}$, the following diagram commutes.
\[\xymatrix{
\mathit{Cl}_3 \times S 
	\ar[rr]^(.65){\tilde{\rho}}
	\ar[d]_{(\mathrm{Ad}\otimes z^2)(\lambda) \times \Delta^{Spin^c}(\lambda)} 
	 && 
S 
	\ar[d]^{\Delta^{Spin^c}(\lambda)} \\
\mathit{Cl}_3 \times S 
	\ar[rr]^(.65){\tilde{\rho}}
	&& 
		S \\
}\]
\end{lem}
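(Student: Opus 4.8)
The plan is to verify the commutativity of the diagram by a direct computation on the generators of $\mathit{Cl}_3$, using the explicit formulas for $\tilde{\rho}$ from Proposition \ref{d} and for $\mathrm{Ad}\otimes z^2$ from Definition \ref{gg}. Concretely, commutativity of the diagram amounts to the identity
\begin{equation*}
\Delta^{Spin^c}(\lambda)\,\tilde{\rho}(x)\,\Delta^{Spin^c}(\lambda)^{-1} = \tilde{\rho}\bigl((\mathrm{Ad}\otimes z^2)(\lambda)(x)\bigr)
\end{equation*}
for all $x \in \mathit{Cl}_3$ and $\lambda = [\mu,u] \in Spin^c_{8n+2+4\tau}$. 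Since both sides are $\mathbb{R}$-linear in $x$ and multiplicative, it suffices to check the identity on the generators $e_1,\dots,e_{8n+2+4\tau}$, $\epsilon_1$, $\eta_1$, $\eta_2$.

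First I would treat the generators $e_i$ and $\epsilon_1$. For these, $\tilde{\rho}$ restricts to the complex representation $\rho$ of $\mathit{Cl}_0$ (together with the fixed action of $\epsilon_1 \mapsto \rho_{2,3}(\epsilon_1) = (-1)^{\mathrm{deg}}$ in the $8n+6$ case, or $\tau=0$ meaning $\epsilon_1$ is not present as a "new" generator in the $8n+2$ case — one should be slightly careful here about which generators are in play, but in either case the relevant action is $\mathbb{C}$-linear). Using $\Delta^{Spin^c}(\lambda) = \rho_{2n,0}(\mu)u$ and the fact that $\rho$ is a genuine representation, conjugation by $\Delta^{Spin^c}(\lambda)$ acts on $\rho(e_i)$ exactly as the adjoint action $\mu e_i \mu^{-1}$ inside the Clifford algebra (the scalar $u$ cancels), which is precisely $(\mathrm{Ad}\otimes z^2)(\lambda, e_i)$. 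The generator $\epsilon_1$ is fixed by $\mathrm{Ad}\otimes z^2$ and its action $(-1)^{\mathrm{deg}}$ commutes with $\rho_{2n,0}(\mu)$ since the grading operator is $Spin$-invariant (and $u$ is a scalar), so that case is immediate.

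The main work — and the step I expect to be the real obstacle — is the pair $\eta_1,\eta_2$, where the complex structure enters nontrivially. By the construction in Step 2 of Proposition \ref{d}, $\tilde{\rho}(\eta_k)$ is an \emph{anti-linear} operator of the form $J(-1)^{\mathrm{deg}} \otimes \rho_{2,3}(\epsilon_{k+1})$. Because $\tilde{\rho}(\eta_k)$ is anti-linear, conjugating by $\Delta^{Spin^c}(\lambda) = \rho_{2n,0}(\mu)u$ does \emph{not} cancel the $u$-factor: instead one picks up $u \cdot \bar{u}^{-1} = u^2$ (since $\tilde{\rho}(\eta_k)$ followed by multiplication by $\bar u$ equals $u \cdot \tilde{\rho}(\eta_k)$ after moving the scalar past the anti-linear map). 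This $u^2$ is exactly the $z^2$-twist built into Definition \ref{gg}, where $\mathrm{Ad}\otimes z^2(\lambda,\eta_k) = u^2\eta_k$. For the $\mu$-part one must check that $\rho_{2n,0}(\mu)$ commutes with $J$ and with $(-1)^{\mathrm{deg}}$: the former because $J$ anti-commutes with each Clifford generator and hence commutes with the even-degree elements of $Spin$ (Lemma \ref{202} says $J$ is even-degree and commutes with the grading), and the latter because $\mu \in Spin$ is even. Finally one must confirm that the action of $i$ on $\{\eta_1,\eta_2\}$ specified in Definition \ref{gg} ($i\cdot\eta_1 = \eta_2$, $i\cdot\eta_2 = -\eta_1$) matches how the complex structure on $S$ interacts with the anti-linear operators $\tilde{\rho}(\eta_k)$ — i.e. that $\tilde{\rho}(i\cdot\eta_1) = i\,\tilde{\rho}(\eta_1) = \tilde{\rho}(\eta_2)$ as operators — which follows from the defining relation $i = -\rho_{2,3}(e_1e_2\epsilon_1)$ and a short computation with $\rho_{2,3}(\epsilon_2),\rho_{2,3}(\epsilon_3)$ on $\Lambda^*_{\mathbb{R}}V$. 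Assembling these pieces gives the commutativity on all generators, hence on all of $\mathit{Cl}_3$, completing the proof.
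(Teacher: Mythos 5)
Your proof is correct, and since the paper gives no argument for this lemma beyond ``can easily be checked,'' the generator-by-generator conjugation computation you carry out is exactly the verification the paper leaves to the reader; in particular you correctly identify the crux, namely that anti-linearity of $\tilde{\rho}(\eta_k)$ converts the scalar conjugation $u(\cdot)u^{-1}$ into multiplication by $u\bar{u}^{-1}=u^2$, which is precisely the $z^2$ twist in Definition \ref{gg}. One small inaccuracy: your parenthetical that ``$\epsilon_1$ is not present as a `new' generator in the $8n+2$ case'' is off — $\epsilon_1$ is a generator of $\mathit{Cl}_3$ in both cases, and in both cases $\tilde{\rho}(\epsilon_1)=(-1)^{\mathrm{deg}}$ on $S$, which commutes with $\Delta^{Spin^c}(\lambda)$ since $\mu$ is even and $u$ is a scalar; but this does not affect the rest of your argument.
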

\begin{lem}
\label{dd} 
\begin{align*}
 \bigl\{\hspace{1mm} v \in \mathit{Cl}_2 \hspace{1mm}|\hspace{1mm} v\epsilon_1 + \epsilon_1 v = 0 ,\hspace{1mm}
ve_i + e_iv = 0 \hspace{1mm} (i=1 \ldots 8n+2+4\tau \hspace{1mm})\bigl\}\\
	=
		 \bigl\{ \hspace{1mm}a\epsilon_2 + b e_1 e_2 \cdots e_{8n+2+4\tau} \epsilon_1 \epsilon_2 \in \mathit{Cl}_2 \hspace{1mm}|\hspace{1mm} a, b \in \mathbb{R} \hspace{1mm}\bigl\}.
\end{align*}
\end{lem}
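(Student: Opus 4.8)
The plan is to reduce the claim to the standard description of the centre of a Clifford algebra of odd rank. Write $N := 8n+2+4\tau$ and let $\mathit{Cl}(A)\subseteq\mathit{Cl}_2$ be the subalgebra generated by $A:=\{e_1,\dots,e_N,\epsilon_1\}$; it is a Clifford algebra on the $N+1$ pairwise anticommuting generators in $A$, and $N+1=8n+3+4\tau$ is \emph{odd}. Since $\epsilon_2$ anticommutes with every element of $A$ and $\epsilon_2^{2}=\pm1$ is invertible, the monomial basis of $\mathit{Cl}_2$ splits according to whether $\epsilon_2$ occurs, giving a direct sum of real vector spaces $\mathit{Cl}_2=\mathit{Cl}(A)\oplus\mathit{Cl}(A)\epsilon_2$; so every $v\in\mathit{Cl}_2$ is uniquely $v=v_0+v_1\epsilon_2$ with $v_0,v_1\in\mathit{Cl}(A)$. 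First I would record this decomposition.

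Next I would expand the anticommutation conditions. For $x\in A$ one has $\epsilon_2 x=-x\epsilon_2$, hence $vx+xv=(v_0x+xv_0)+(xv_1-v_1x)\epsilon_2$; by the direct sum decomposition this vanishes for every $x\in A$ if and only if $v_0$ anticommutes with every generator of $\mathit{Cl}(A)$ and $v_1$ lies in the centre of $\mathit{Cl}(A)$. Now let $\omega:=e_1\cdots e_N\epsilon_1$ be the volume element of $\mathit{Cl}(A)$, which is invertible ($\omega^{2}=\pm1$) and, since $\mathit{Cl}(A)$ has odd rank $N+1$, is central. If $v_0$ anticommutes with all $N+1$ generators, then moving $v_0$ across the product gives $v_0\omega=(-1)^{N+1}\omega v_0=-\omega v_0$, whereas centrality of $\omega$ gives $v_0\omega=\omega v_0$; therefore $\omega v_0=0$ and $v_0=0$. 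And in odd rank the centre of $\mathit{Cl}(A)$ is exactly $\mathbb{R}\cdot 1\oplus\mathbb{R}\cdot\omega$, so $v_1=a+b\omega$ with $a,b\in\mathbb{R}$ and consequently $v=v_1\epsilon_2=a\epsilon_2+b\,e_1\cdots e_N\epsilon_1\epsilon_2$. This yields the inclusion ``$\subseteq$''.

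For the reverse inclusion I would simply observe that $\epsilon_2$, being a generator distinct from each $e_i$ and from $\epsilon_1$, anticommutes with all of them, and that $e_1\cdots e_N\epsilon_1\epsilon_2$ is the product of all $N+2$ generators of $\mathit{Cl}_2$, which anticommutes with any single generator because the number $N+1$ of the remaining ones is odd; hence so does every real linear combination of the two. The only steps requiring care are the two sign computations — the splitting of $vx+xv$ and the relation $v_0\omega=-\omega v_0$ — together with invoking the standard fact that a Clifford algebra of odd rank has two-dimensional centre spanned by $1$ and the volume element; none of this is a genuine obstacle, which is consistent with the statement that the lemma is easily checked. (Alternatively, one could argue directly in the monomial basis of $\mathit{Cl}_2$, using that a basis monomial of length $k$ and support $\Sigma$ anticommutes with a generator $f_j$ iff either $j\notin\Sigma$ and $k$ is odd, or $j\in\Sigma$ and $k$ is even, and then checking that imposing this for $f_j\in A$ forces $\Sigma\in\{\{\epsilon_2\},\{e_1,\dots,e_N,\epsilon_1,\epsilon_2\}\}$.)
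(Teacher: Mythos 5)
Your proof is correct. Note that the paper offers no argument for this lemma at all (it is dismissed as "easily checked"), so there is no proof to compare against; your write-up is a legitimate way to carry out the check. The route you take — splitting $\mathit{Cl}_2=\mathit{Cl}(A)\oplus\mathit{Cl}(A)\epsilon_2$ with $A=\{e_1,\dots,e_N,\epsilon_1\}$, reading off that the $\mathit{Cl}(A)$-component must anticommute with all $N+1$ generators (hence vanish, by the volume-element trick) while the $\epsilon_2$-component must be central in the odd-rank algebra $\mathit{Cl}(A)$, whose centre is $\mathbb{R}\oplus\mathbb{R}\,\omega$ — is the standard structural argument, and your parenthetical monomial-basis computation is the most elementary alternative. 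One point worth making explicit is that your argument uses only that $\epsilon_2$ is invertible and anticommutes with $A$, never the sign of $\epsilon_2^2$; this matters because in the paper's two cases $\mathit{Cl}_2$ is $\mathit{Cl}_{8n+2,2}$ (where the extra generator squares to $+1$) respectively $\mathit{Cl}_{8n+7,1}$ (where it squares to $-1$), so your proof covers both cases uniformly, exactly as the uniform statement requires.
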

\begin{rem}
\label{ddd}
Note that there is an isomorphism of algebras.
\begin{equation*}
\mathrm{End}_{\mathbb{R}}(S) \cong \mathbb{R}(4^{\tau}\cdot4\cdot16^n) \cong \mathit{Cl}_2.
\end{equation*}
\end{rem}

\section{An Index on a $4n+2$ Dimensional Open Spin$^c$ Manifold}
Let $X$ be a $4n+2$ dimensional spin$^c$ manifold which is not necessarily closed.
We denote by $\pi \colon TX \rightarrow X$ the tangent bundle and by $Cl(X)$ the Clifford bundle of $TX$ with its riemannian metric.
Let $\mathit{P}_{Spin^c}(X)$ be the principal $Spin^c_{4n+2}$ bundle, 
and we write its transition functions as $\{ h_{\alpha \beta} \}$.
$ h_{\alpha \beta} $ can be written as $h_{\alpha \beta} = [ g_{\alpha \beta}, f_{\alpha \beta} ]$ using functions 
$g_{\alpha \beta} \colon U_{\alpha} \cap U_{\beta} \rightarrow Spin_{4n+2}$ and
$f_{\alpha \beta} \colon U_{\alpha} \cap U_{\beta} \rightarrow U(1)$.
Let $S(X)$ denote the spinor bundle of $X$
and write $c_{X}$ for the Clifford action of $Cl(X)$ on $S(X)$.
We denote by $\mathcal{L}_X$ the determinant line bundle of the spin$^c$ structure.

We take a section $h_{\mathrm{det}}$ of $\mathcal{L}_X$ and assume that $h_{\mathrm{det}}^{-1}(0) \subset X$ is compact.
Let $E = E^0 \oplus E^1 \rightarrow X$ be a $\mathbb{Z}_2$-graded hermitian vector bundle which has a structure of a complexification of some real vector bundle on $X-K$, the complement of a compact set $K$ in $X$\footnote[2]{A complex vector bundle with this structure is called to have a {\em Lagrangian subbundle}.} .
In this section we construct an element of $K_{\mathrm{cpt}}(TX)$ from these data and define an index.

Since $X$ can be noncompact, we adopt a principle of localizing a Dirac operator on a neighborhood of some compact set in order to define an element of $K_{\mathrm{cpt}}(TX)$.
We now consider the following subbundle of $\mathrm{End}_{\mathbb{R}}(S(X))$ in order to formalize a structure which localizes a Dirac operator.

\begin{defi}
\label{3010}
Let $\mathcal{L}_{\mathcal{H}}$ be a subbundle of  $\mathrm{End}_{\mathbb{R}}(S(X))$ consists of the elements which is 
(skew-)symmetric, odd degree, and anti-commute with Clifford multiplication.
We consider symmetric elements in $8n+2$ dimensional cases, and skew-symmetric elements in $8n+6$ dimensional cases.
\end{defi}

\begin{lem}
\label{line}
$\mathcal{L}_\mathcal{H}$ is a complex line bundle.
\end{lem}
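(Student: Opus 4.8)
The plan is to identify the fiber of $\mathcal{L}_\mathcal{H}$ pointwise and then check that the complex structure and transition functions assemble into a well-defined complex line bundle. First I would work in a single fiber: fix $x\in X$ and a trivialization identifying $S(X)_x$ with $S$ and $Cl(X)_x$ with $\mathit{Cl}_0 = \mathit{Cl}_{8n+2+4\tau,0}$ acting via $\rho$. By Remark \ref{ddd} the real algebra $\mathrm{End}_{\mathbb{R}}(S)$ is isomorphic to $\mathit{Cl}_2$, and under this identification the elements of $S(X)_x$ that anti-commute with all of $\rho(e_1),\dots,\rho(e_{8n+2+4\tau})$ are exactly those anti-commuting with the corresponding generators inside $\mathit{Cl}_2$; the extra generator $\epsilon_1$ of $\mathit{Cl}_2$ should be chosen so that the condition ``anti-commutes with $\epsilon_1$'' encodes the degree/symmetry constraint of Definition \ref{3010}. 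Then Lemma \ref{dd} computes this space to be the real span of $\epsilon_2$ and $e_1\cdots e_{8n+2+4\tau}\epsilon_1\epsilon_2$, hence two-dimensional over $\mathbb{R}$.

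Next I would pin down which of these two real dimensions survives the symmetry and degree requirements. Using the explicit description of $\tilde\rho$ from Proposition \ref{d} (built from $\rho_{2,3}$ on $\Lambda^*_{\mathbb{R}}V$ together with $J$), one checks that $\rho_{2,3}(\epsilon_2)$ and $\rho_{2,3}(\epsilon_3)$ play the roles of the two basis vectors, that both are odd-degree and anti-commute with Clifford multiplication, and that exactly the combination dictated by the parity $\tau$ (symmetric when $\tau=0$, i.e.\ dimension $8n+2$; skew-symmetric when $\tau=1$, i.e.\ dimension $8n+6$) is selected — but in fact the whole two-dimensional space consists of elements of the prescribed symmetry type, since the interior-plus-exterior operators $e_k^\wedge+e_k^\lrcorner$ are symmetric and the twist by $J$ and $(-1)^{\mathrm{deg}}$ adjusts the sign consistently. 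So the real rank is $2$, and the action $i\cdot(-)$ coming from multiplication by $i$ on $S$ (equivalently $i\cdot\eta_1=\eta_2$, $i\cdot\eta_2=-\eta_1$ as in Definition \ref{gg}) preserves this subspace and squares to $-1$ there, giving each fiber the structure of a $1$-dimensional complex vector space.

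Finally I would verify that this fiberwise structure globalizes. The transition functions of $S(X)$ act on $\mathrm{End}_{\mathbb{R}}(S(X))$ by conjugation, i.e.\ by $\mathrm{Ad}$, and Lemma \ref{mmm} shows that the subspace cut out by ``odd degree, (skew-)symmetric, anti-commuting with Clifford multiplication'' is preserved by these conjugations (the Clifford-multiplication condition is $\mathrm{Ad}$-equivariant, degree and symmetry are preserved because $\Delta^{Spin^c}(\lambda)$ is even and unitary up to scalar); moreover the complex structure $i\cdot(-)$ is preserved since it is intrinsic to $S(X)$. Hence $\mathcal{L}_\mathcal{H}$ is a locally trivial subbundle of $\mathrm{End}_{\mathbb{R}}(S(X))$ of real rank $2$ with a compatible fiberwise complex structure, i.e.\ a complex line bundle.

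The main obstacle I anticipate is the bookkeeping in the middle step: correctly matching the abstract algebraic condition of Definition \ref{3010} (with its dimension-dependent choice of symmetric versus skew-symmetric) to the concrete basis $\{\epsilon_2,\ e_1\cdots e_{8n+2+4\tau}\epsilon_1\epsilon_2\}$ of Lemma \ref{dd}, and confirming that the complex structure $i\cdot(-)$ genuinely lands back inside this two-dimensional real space rather than mixing it with its complement — this is where the precise formulas for $\tilde\rho$ and for the $i$-action on $\eta_1,\eta_2$ in Definition \ref{gg} must be invoked carefully, and where a sign error would be easy to make.
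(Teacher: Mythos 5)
Your proposal is correct and follows essentially the same route as the paper: use Remark \ref{ddd} to identify $\mathrm{End}_{\mathbb{R}}(S)$ with $\mathit{Cl}_2$, note that anti-commuting with $\epsilon_1$ encodes the odd-degree condition, invoke Lemma \ref{dd} to get a $2$-dimensional real fiber, observe that anti-linearity makes multiplication by $i$ preserve the (skew-)symmetry so that the whole fiber inherits a complex structure, and let $\mathrm{Ad}$-equivariance globalize it. The paper's proof is just a terser version of the same argument (it goes via ``$v$ is anti-linear, hence $iv\in\mathcal{L}_{\mathcal{H}}$, then Lemma \ref{dd} and Remark \ref{ddd}''); the only small slip in your write-up is referring to ``elements of $S(X)_x$'' where you mean elements of $\mathrm{End}_{\mathbb{R}}(S(X)_x)$.
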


\begin{proof}
Since $v \in \mathcal{L}_{\mathcal{H}}$ anti-commutes with $(-1)^{\tau}i \rho(e_1 \cdots e_{8n+2+4\tau})$ and Clifford multiplication, $v$ is anti-linear.
It follows easily that $iv$ belongs to $\mathcal{L}_{\mathcal{H}}$.
The lemma then follows from Lemma \ref{dd} and Remark \ref{ddd}.
\end{proof}
\begin{lem}
\label{k}
For any section $h_{\mathrm{End}}$ of $\mathcal{L}_{\mathcal{H}}$ whose support\footnote[3]{
For an endomorphism $f\colon E \rightarrow E$ of a vector bundle $E \rightarrow X$, we call its {\em support} as the subset of $X$ consists of the elements on which $f$ is not invertible, and denote by supp$(f)$.} is compact, the support of
$\sigma_{X, h_{\mathrm{End}}} :=
		c_{X} \otimes i + \pi^{\ast}h_{\mathrm{End}} \otimes  i^{\tau} $
is also compact.
\end{lem}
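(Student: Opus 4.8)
The plan is to show that $\sigma_{X,h_{\mathrm{End}}}$ is invertible at every point of $TX$ outside the intersection of the zero section with the zero locus of the section $h_{\mathrm{End}}$, and then to observe that this intersection is compact.

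First I would compute the square of $\sigma_{X,h_{\mathrm{End}}}$ over a point $(x,\xi)\in TX$. The factors $i$ and $i^{\tau}$ act on the auxiliary tensor factor, so they commute with one another and with everything coming from $S(X)$, while $h_{\mathrm{End}}$ anti-commutes with Clifford multiplication by Definition \ref{3010}; hence the two cross terms in $\bigl(c_X(\xi)\otimes i+\pi^{\ast}h_{\mathrm{End}}(x)\otimes i^{\tau}\bigr)^{2}$ sum to $\bigl(c_X(\xi)h_{\mathrm{End}}(x)+h_{\mathrm{End}}(x)c_X(\xi)\bigr)\otimes i^{1+\tau}=0$, and
\[
\sigma_{X,h_{\mathrm{End}}}(\xi)^{2}=\bigl(c_X(\xi)\otimes i\bigr)^{2}+\bigl(\pi^{\ast}h_{\mathrm{End}}(x)\otimes i^{\tau}\bigr)^{2}.
\]
The Clifford relation $c_X(\xi)^{2}=-|\xi|^{2}$ together with $i^{2}=-1$ makes the first term $|\xi|^{2}\cdot\mathrm{id}$. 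For the second term I would use that $\mathcal{L}_{\mathcal{H}}$ is a complex line bundle whose nonzero elements are invertible with square a real scalar multiple of the identity (Lemma \ref{line}, via Lemmas \ref{dd} and \ref{ddd}); here the (skew-)symmetry built into Definition \ref{3010}, paired with the exponent $\tau$, is what forces the sign to be right, giving $\bigl(\pi^{\ast}h_{\mathrm{End}}(x)\otimes i^{\tau}\bigr)^{2}=\|h_{\mathrm{End}}(x)\|^{2}\cdot\mathrm{id}$ for a continuous function $x\mapsto\|h_{\mathrm{End}}(x)\|\geq 0$ that vanishes exactly where the section $h_{\mathrm{End}}$ vanishes. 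Thus $\sigma_{X,h_{\mathrm{End}}}(\xi)^{2}=\bigl(|\xi|^{2}+\|h_{\mathrm{End}}(x)\|^{2}\bigr)\cdot\mathrm{id}$, so $\sigma_{X,h_{\mathrm{End}}}$ fails to be invertible over $(x,\xi)$ only if $\xi=0$ and $h_{\mathrm{End}}(x)=0$.

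Consequently $\mathrm{supp}(\sigma_{X,h_{\mathrm{End}}})$ is contained in the image of $\{x\in X\mid h_{\mathrm{End}}(x)=0\}$ under the zero section $X\hookrightarrow TX$. Since a nonzero element of $\mathcal{L}_{\mathcal{H}}$ is invertible, this zero set is exactly $\mathrm{supp}(h_{\mathrm{End}})$ in the sense of the footnote, which is compact by hypothesis; as the zero section is a closed embedding and $\mathrm{supp}(\sigma_{X,h_{\mathrm{End}}})$ is closed in $TX$, it follows that $\mathrm{supp}(\sigma_{X,h_{\mathrm{End}}})$ is a closed subset of a compact set, hence compact. The step I expect to require genuine care is the sign bookkeeping in the square of the second summand — precisely the reason Definition \ref{3010} couples symmetric endomorphisms with $\tau=0$ in the $8n+2$ case and skew-symmetric endomorphisms with $\tau=1$ in the $8n+6$ case — and it amounts to a short computation in the model furnished by Lemma \ref{dd}.
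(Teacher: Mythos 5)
Your proof is correct and follows the same route as the paper's: compute $\sigma_{X,h_{\mathrm{End}}}^2$, observe that the cross terms vanish because $h_{\mathrm{End}}$ anti-commutes with Clifford multiplication, and conclude that the two positive-semidefinite summands force the support of $\sigma_{X,h_{\mathrm{End}}}$ to be the zero section over $\mathrm{supp}(h_{\mathrm{End}})$, which is compact. You spell out somewhat more explicitly than the paper why $(-1)^{\tau}(\pi^{\ast}h_{\mathrm{End}})^{2}$ is a nonnegative scalar (tracing it back to Lemmas \ref{dd} and \ref{ddd}) and why the resulting closed subset of a compact set is compact, but the underlying argument is the same.
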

\begin{proof}
It is enough to show that $\mathrm{supp}(\sigma_{X,h_{\mathrm{End}}}) \cong \mathrm{supp}(h_{\mathrm{End}})$.
Since $h_{\mathrm{End}}$ anti-commutes with Clifford multiplication, we have
 \begin{align*}
    (c_{X} \otimes i+\pi^{\ast}h_{\mathrm{End}} \otimes  i^{\tau})^2 
      &=\{ -c_{X}^2 +(-1)^{\tau}(\pi^{\ast}h_{\mathrm{End}})^2\} \otimes 1.
\end{align*}
Since $-c_{X}^2$ and $(-1)^{\tau}(\pi^{\ast}h_{\mathrm{End}})^2$ are both positive, we have
\begin{equation*}
\mathrm{supp}(\sigma_{X, h_{\mathrm{End}}}) = \mathrm{supp}(c_X) \cap \mathrm{supp}(\pi^{\ast}h_{\mathrm{End}}) \cong \mathrm{supp}(h_{\mathrm{End}}).
\end{equation*}
\end{proof}
\vspace{-5mm}
\begin{defi}
\label{gen}
We can define an element of $K_{\mathrm{cpt}}(TX)$ by
\begin{equation*}
\alpha(h_{\mathrm{End}}) := [\pi^{\ast}S(X)^0 \underset{\mathbb{R}}{\otimes}\mathbb{C},
	 \pi^{\ast}S(X)^1\underset{\mathbb{R}}{\otimes}\mathbb{C}\colon 
		\sigma_{X, h_{\mathrm{End}}}]
	\in	K_{\mathrm{cpt}}(TX).
\end{equation*}
\end{defi}
\vspace{-1mm}

We can generalize the construction above to twisted cases.
In the setting of Lemma \ref{k}, we assume further that we are given a $\mathbb{Z}_2$-graded hermitian vector bundle $E \rightarrow X$ 
which has a structure of a complexification of some real vector bundle on the complement of some compact set $K$ in $X$.
This complexification structure gives a complex conjugation on $X-K$, i.e., a continuous section $s$ of $\mathrm{End}_{\mathbb{R}}(E)|_{X-K}$ such that, at each $x \in X-K$, $s(x)$ is even degree, anti-linear, and whose square is $1$.
Let us choose an open neighborhood of $K$ in $X$ which is homotopic to $K$.
We take a real valued continuous function $f$ on $X$ whose value is $0$ on $K$, $1$ on the complement of $U$ and non-zero on $U-K$.
A product $h' := fs$ defines a global section of $\mathrm{End}_{\mathbb{R}}(E)$.
If we set $h'_{\mathrm{End}} := h_{\mathrm{End}} \otimes h' \in \mathrm{End}_{\mathbb{R}}(S(X) \hat{\otimes} E)$ then, 
at each point $x \in X$, $h'_{\mathrm{End}}(x)$ is (skew-)symmetric, odd degree, anti-commutes with Clifford multiplication, and its support is compact.
In the same manner as above, we can define an element of $K_{\mathrm{cpt}}(TX)$ by
\begin{equation*}
\alpha(E, h_{\mathrm{End}}) := [\hspace{1mm} \pi^{\ast} \bigl( S(X) \underset{\mathbb{C}}{\hat{\otimes}} E \bigl)^0\underset{\mathbb{R}}{\otimes}\mathbb{C}, 
				\hspace{1mm} \pi^{\ast}\bigl( S(X)\underset{\mathbb{C}}{\hat{\otimes}} E \bigl)^1\underset{\mathbb{R}}{\otimes}\mathbb{C} \hspace{1mm}\colon\hspace{1mm} 
					\sigma_{X,E,h_{\mathrm{End}}}]
	\in K_{\mathrm{cpt}}(TX),
\end{equation*}
where 
$\sigma_{X,E,h_{\mathrm{End}}}
	:=
		\bigl( c_{X}\otimes1 \bigl) \otimes i + \pi^{\ast}h'_{\mathrm{End}} \otimes i^{\tau}$.
\begin{rem}
A tensor product of two anti-linear maps defines an anti-linear map between a $\mathbb{C}$-tensor product.
The complex conjugation of $E$ is necessary because $h_{\mathrm{End}}$ is anti-linear, and we need another anti-linear endomorphism of $E$ in order to get a well-defined endomorphism of the $\mathbb{C}$-tensor product of $S(X)$ and $E$.
\end{rem}

\begin{rem}
Let $V$ be a neighborhood of $\mathrm{supp}(\sigma_{X,E,h_{\mathrm{End}}})$ in $TX$.
We can ignore the information of $\alpha(E, h_{\mathrm{End}})$ outside $V$ since $\sigma_{X,E,h_{\mathrm{End}}}$ gives isomorphism there, 
and we can consider $\alpha(E, h_{\mathrm{End}})$ as an element of $K_{\mathrm{cpt}}(V)$.
This is because the symbol of the Dirac operator is localized near $\mathrm{supp}(\sigma_{X,E,h_{\mathrm{End}}})$ by the perturbation by $\pi^{\ast}h'_{\mathrm{End}} \otimes i^{\tau}$.
In this note we call this as a {\em localization} of the Dirac operator.
\end{rem}
We have thus obtained an element of $K_{\mathrm{cpt}}(TX)$ from a compactly supported section of $\mathcal{L}_{\mathcal{H}}$.
We next show that a compactly supported section of $\mathcal{L}_X$ gives an element of $K_{\mathrm{cpt}}(TX)$.
\begin{defi}
\label{3017}
We define an {\em indefinite Clifford bundle} on $X$ by
\begin{equation*}
	\widetilde{\mathit{Cl}}(X)
		:= 
			\mathit{P}_{Spin^c}(X)
				\underset{\mathrm{Ad}\otimes z^2}{\times}
					\mathit{Cl}_3.
\end{equation*}
\end{defi}
\vspace{-2mm}
Definition \ref{gg} shows that we have a subbundle of $\widetilde{\mathit{Cl}}(X)$ spanned by $\eta_1, \eta_2$ at each point, which is a complex line bundle.
We denote it by $\mathcal{L}_{\mathit{Cl}}$.
\begin{lem}
\label{adad}
$\mathcal{L}_{\mathit{Cl}} \cong \mathcal{L}_{\mathit{X}}$.
\end{lem}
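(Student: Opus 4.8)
The plan is to identify both $\mathcal{L}_{\mathit{Cl}}$ and $\mathcal{L}_X$ with the complex line bundle associated to $\mathit{P}_{Spin^c}(X)$ via the squaring character $\delta\colon Spin^c_{4n+2}\to U(1)$, $\delta([\mu,u])=u^2$, which is well defined because $(-u)^2=u^2$. First I would recall that this is exactly the definition of the determinant line bundle: with the transition functions $h_{\alpha\beta}=[g_{\alpha\beta},f_{\alpha\beta}]$ of $\mathit{P}_{Spin^c}(X)$, the bundle $\mathcal{L}_X$ has transition functions $f_{\alpha\beta}^2$, equivalently $\mathcal{L}_X\cong \mathit{P}_{Spin^c}(X)\underset{\delta}{\times}\mathbb{C}$.

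Next I would unwind the definition of $\mathcal{L}_{\mathit{Cl}}$. By Definition \ref{gg}, at each point its fiber is the $\mathrm{Ad}\otimes z^2$-invariant real $2$-plane $\mathrm{span}_{\mathbb{R}}\{\eta_1,\eta_2\}\subset\mathit{Cl}_3$, equipped with the complex structure $i\cdot\eta_1=\eta_2$, $i\cdot\eta_2=-\eta_1$, and for $\lambda=[\mu,u]$ the structure group acts there by $\mathrm{Ad}\otimes z^2(\lambda,\eta_k)=u^2\eta_k$ $(k=1,2)$. Sending $\eta_1\mapsto 1$ and $\eta_2\mapsto i$ gives a $\mathbb{C}$-linear isomorphism $(\mathrm{span}_{\mathbb{R}}\{\eta_1,\eta_2\},i)\cong\mathbb{C}$ under which the action of $\lambda$ becomes multiplication by $u^2=\delta(\lambda)$; this is the only real computation in the argument. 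Restricting the representation $\mathrm{Ad}\otimes z^2$ to this invariant subspace and applying the associated-bundle construction then yields
\[
\mathcal{L}_{\mathit{Cl}} = \mathit{P}_{Spin^c}(X)\underset{\mathrm{Ad}\otimes z^2}{\times}\mathrm{span}_{\mathbb{R}}\{\eta_1,\eta_2\}\cong \mathit{P}_{Spin^c}(X)\underset{\delta}{\times}\mathbb{C}=\mathcal{L}_X,
\]
equivalently both bundles have transition functions $f_{\alpha\beta}^2$ on $U_\alpha\cap U_\beta$.

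The step needing the most care — and what I would regard as the main (if modest) obstacle — is the verification that the $Spin^c_{4n+2}$-action on $\mathrm{span}_{\mathbb{R}}\{\eta_1,\eta_2\}$ is genuinely $\mathbb{C}$-linear for the prescribed complex structure (so that $\mathcal{L}_{\mathit{Cl}}$ really is a complex line bundle, as already asserted after Definition \ref{3017}, rather than merely a rank-$2$ real bundle) and that it descends to the quotient group $Spin^c_{4n+2}=(Spin_{4n+2}\times U(1))/\{\pm1\}$; both reduce to the identity $(-u)^2=u^2$ together with the convention $i\cdot\eta_1=\eta_2$ of Definition \ref{gg}. One should also note that replacing this complex structure by its opposite would produce the conjugate bundle $\overline{\mathcal{L}_X}=\mathcal{L}_X^{\ast}$, so the specific sign in Definition \ref{gg} is what makes the character come out to be $\delta$ and not $\bar\delta$. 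Once this bookkeeping is settled, the matching of characters, hence of line bundles, is immediate.
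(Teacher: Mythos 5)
Your proposal is correct and follows essentially the same route as the paper, which simply notes that $\mathcal{L}_X = P_{Spin^c}\underset{z^2}{\times}\mathbb{C}$ and that Definition \ref{gg} makes the action on $\mathrm{span}_{\mathbb{R}}\{\eta_1,\eta_2\}$ (with the complex structure $i\cdot\eta_1=\eta_2$) precisely multiplication by $u^2$. You have merely written out in detail the character-matching that the paper leaves implicit.
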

\begin{proof}
Since $\mathcal{L}_X = P_{Spin^c} \underset{z^2}{\times} \mathbb{C}$, the lemma follows from Definition \ref{gg}.
\end{proof}
\vspace{-1mm}
If we take $\lambda = h_{\alpha \beta}(x)$ at Lemma \ref{mmm}, the next result follows.
\begin{prop}
\label{ad}
The action $\tilde{\rho}$ of $\mathit{Cl}_3$ on $S$ gives the representation $\tilde{c}_X$ of $\widetilde{\mathit{Cl}}(X)$ on $S(X)$, 
and thus we have a bundle map 
$\tilde{c}_X \colon \mathcal{L}_X \rightarrow \mathrm{End}_{\mathbb{R}}(S(X))$.
\end{prop}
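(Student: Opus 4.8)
The plan is to obtain $\tilde{c}_X$ as the bundle map associated to the fiberwise representation $\tilde{\rho}$, exploiting that $S(X)$ and $\widetilde{\mathit{Cl}}(X)$ are both built from the same principal bundle $\mathit{P}_{Spin^c}(X)$ and that $\tilde{\rho}$ intertwines the two structure-group actions in question.

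First I would recall that $S(X) = \mathit{P}_{Spin^c}(X) \underset{\Delta^{Spin^c}}{\times} S$ and, by Definition \ref{3017}, $\widetilde{\mathit{Cl}}(X) = \mathit{P}_{Spin^c}(X) \underset{\mathrm{Ad}\otimes z^2}{\times} \mathit{Cl}_3$. Over an open cover $\{U_\alpha\}$ trivializing $\mathit{P}_{Spin^c}(X)$, with transition functions $\{h_{\alpha\beta}\}$, the bundles $S(X)$ and $\widetilde{\mathit{Cl}}(X)$ are glued from $U_\alpha \times S$ and $U_\alpha \times \mathit{Cl}_3$ by the cocycles $\Delta^{Spin^c}(h_{\alpha\beta})$ and $(\mathrm{Ad}\otimes z^2)(h_{\alpha\beta})$, respectively.

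Next, over each $U_\alpha$ I would define $\tilde{c}_X$ in these trivializations to be the single map $\tilde{\rho}\colon \mathit{Cl}_3 \times S \to S$ of Proposition \ref{d} (so that $v \in \mathit{Cl}_3$ acts on $s \in S$ by $\tilde{\rho}(v,s)$). That these local definitions glue to a global bundle map amounts, on $U_\alpha \cap U_\beta$, to the identity
\[
\tilde{\rho}\bigl((\mathrm{Ad}\otimes z^2)(h_{\alpha\beta}(x))\,v,\ \Delta^{Spin^c}(h_{\alpha\beta}(x))\,s\bigr) = \Delta^{Spin^c}(h_{\alpha\beta}(x))\,\tilde{\rho}(v,s)
\]
for all $x \in U_\alpha \cap U_\beta$, $v \in \mathit{Cl}_3$, $s \in S$, which is exactly the commutativity of the diagram in Lemma \ref{mmm} with $\lambda = h_{\alpha\beta}(x)$; the same computation shows the local expression is independent of the chosen local trivialization of $\mathit{P}_{Spin^c}(X)$. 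Since each $\tilde{\rho}(v,\cdot)$ is only $\mathbb{R}$-linear, $\tilde{c}_X$ takes values in $\mathrm{End}_{\mathbb{R}}(S(X))$; and since $\tilde{\rho}$ is an algebra representation in each fiber (Proposition \ref{d}), $\tilde{c}_X$ is fiberwise an algebra homomorphism, i.e.\ a representation of the bundle of algebras $\widetilde{\mathit{Cl}}(X)$ on $S(X)$ which, because $\tilde{\rho}$ restricts to $\rho$, restricts on $Cl(X) \subset \widetilde{\mathit{Cl}}(X)$ to the Clifford action $c_X$.

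Finally, restricting $\tilde{c}_X$ to the line subbundle $\mathcal{L}_{\mathit{Cl}} \subset \widetilde{\mathit{Cl}}(X)$ spanned fiberwise by $\eta_1, \eta_2$ and using the isomorphism $\mathcal{L}_{\mathit{Cl}} \cong \mathcal{L}_X$ of Lemma \ref{adad} --- under which the complex structure $i\cdot\eta_1 = \eta_2$ corresponds to the $z^2$-action defining $\mathcal{L}_X$ --- yields the asserted bundle map $\tilde{c}_X\colon \mathcal{L}_X \to \mathrm{End}_{\mathbb{R}}(S(X))$. The only genuinely nontrivial point is the gluing in the previous paragraph, and it is supplied verbatim by Lemma \ref{mmm}; the remainder is routine bookkeeping with associated bundles.
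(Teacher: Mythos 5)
Your proof is correct and takes essentially the same approach as the paper. The paper's entire proof is the sentence preceding the proposition --- ``If we take $\lambda = h_{\alpha\beta}(x)$ at Lemma \ref{mmm}, the next result follows'' --- which is exactly the associated-bundle gluing via the equivariance of Lemma \ref{mmm} that you write out explicitly.
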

\begin{lem}
\label{3016}
	$\mathcal{L}_X \cong \mathcal{L}_{\mathcal{H}}$.
\end{lem}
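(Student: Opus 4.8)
The plan is to exhibit the isomorphism as the bundle map $\tilde{c}_X$ of Proposition~\ref{ad}. Precomposing with the identification $\mathcal{L}_X\cong\mathcal{L}_{\mathit{Cl}}$ of Lemma~\ref{adad}, we get a bundle map $\mathcal{L}_X\to\mathrm{End}_{\mathbb{R}}(S(X))$ which I claim is a fibrewise injective $\mathbb{C}$-linear map with image exactly $\mathcal{L}_{\mathcal{H}}$; since $\mathcal{L}_X$ (Lemma~\ref{adad}) and $\mathcal{L}_{\mathcal{H}}$ (Lemma~\ref{line}) are both complex line bundles, this gives the result. By the equivariance of Lemma~\ref{mmm}, all of $S(X)$, $\widetilde{\mathit{Cl}}(X)$ and $\tilde{c}_X$ are associated to the principal $Spin^c$-bundle from the model representation $\tilde{\rho}$ of $\mathit{Cl}_3$ on $S$, so everything reduces to the model statement: $\tilde{\rho}$ carries the plane $\mathrm{span}_{\mathbb{R}}\{\eta_1,\eta_2\}\subset\mathit{Cl}_3$ (with the $i$-action of Definition~\ref{gg}) injectively and $\mathbb{C}$-linearly onto the space of odd, Clifford-anticommuting, (skew-)symmetric endomorphisms of $S$ (with the $i$-action from the proof of Lemma~\ref{line}).

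First I would verify the three defining conditions of Definition~\ref{3010} for $\tilde{\rho}(\eta_1)$ and $\tilde{\rho}(\eta_2)$. Anticommutation with Clifford multiplication is formal: $\tilde{\rho}$ is an algebra homomorphism restricting to $\rho$ on $\mathit{Cl}_0$, and in $\mathit{Cl}_3$ each $\eta_k$ anticommutes with every generator $e_i$. For oddness and (skew-)symmetry I would use the explicit description from Step~2 of the proof of Proposition~\ref{d}, where these two endomorphisms are $J(-1)^{\mathrm{deg}}\otimes\rho_{2,3}(\epsilon_2)$ and $J(-1)^{\mathrm{deg}}\otimes\rho_{2,3}(\epsilon_3)$ on $S\cong S_{8n+4\tau}\hat{\otimes}S_2$, with $\rho_{2,3}(\epsilon_{j+1})=e_j^{\wedge}+e_j^{\lrcorner}$. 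Since $J$ and $(-1)^{\mathrm{deg}}$ are even on $S_{8n+4\tau}$ (Lemma~\ref{202}) while $e_j^{\wedge}+e_j^{\lrcorner}$ is odd on $S_2$, the tensor is odd. For the symmetry one notes that $e_j^{\wedge}+e_j^{\lrcorner}$ is self-adjoint and squares to $1$, that $(-1)^{\mathrm{deg}}$ is self-adjoint, and that $J$ may be chosen orthogonal, so $J^{T}=J$ when $J^2=1$ (the $8n+2$ case, $\tau=0$) and $J^{T}=-J$ when $J^2=-1$ (the $8n+6$ case, $\tau=1$); as $J$ commutes with $(-1)^{\mathrm{deg}}$, the operator $J(-1)^{\mathrm{deg}}$ is symmetric for $\tau=0$ and skew-symmetric for $\tau=1$, and tensoring with the symmetric operator $e_j^{\wedge}+e_j^{\lrcorner}$ preserves this. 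This matches exactly the sign convention built into Definition~\ref{3010}, so both endomorphisms lie in $\mathcal{L}_{\mathcal{H}}$, hence so does the whole $\mathbb{R}$-linear image of the plane. Injectivity is then immediate, since $J$, $(-1)^{\mathrm{deg}}$ and $e_j^{\wedge}+e_j^{\lrcorner}$ are all invertible, so $\tilde{\rho}(\eta_1)$ and $\tilde{\rho}(\eta_2)$ are invertible and in particular linearly independent. Thus $\tilde{c}_X$ is a fibrewise injective bundle map between the real rank-$2$ bundles $\mathcal{L}_X$ and $\mathcal{L}_{\mathcal{H}}$, hence a fibrewise, and therefore a bundle, isomorphism over $\mathbb{R}$. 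Finally, to see it is $\mathbb{C}$-linear I would compare the complex structure $\eta_1\mapsto\eta_2$ on $\mathcal{L}_{\mathit{Cl}}$ of Definition~\ref{gg} with the structure $v\mapsto iv$ on $\mathcal{L}_{\mathcal{H}}$ of Lemma~\ref{line}; this amounts to the single identity $\tilde{\rho}(\eta_2)=i\,\tilde{\rho}(\eta_1)$, a direct check against the complex structure of $S$.

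The main obstacle is the (skew-)symmetry verification together with the matching of the two complex structures: these are the only steps that require the precise hermitian inner product on $S$, the explicit form of the structure map $J$ (in particular its orthogonality), and the decomposition $S\cong S_{8n+4\tau}\hat{\otimes}S_2$, whereas anticommutation, oddness and invertibility are formal consequences of $\tilde{\rho}$ being an algebra homomorphism. (As a cross-check one may instead compare transition functions: $S(X)$ has transition functions $\rho(g_{\alpha\beta})f_{\alpha\beta}$, and conjugation by the scalar $f_{\alpha\beta}\in U(1)$ acts as multiplication by $f_{\alpha\beta}^{2}$ on an anti-linear endomorphism, which is the same cocycle that defines $\mathcal{L}_X$.)
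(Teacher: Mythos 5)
Your proof is correct and takes essentially the same route as the paper: compose the identification $\mathcal{L}_X\cong\mathcal{L}_{\mathit{Cl}}$ of Lemma~\ref{adad} with the Clifford action $\tilde{c}_X$ of Proposition~\ref{ad}, check that it carries $\mathcal{L}_{\mathit{Cl}}$ into $\mathcal{L}_{\mathcal{H}}$, and use that a nonzero $\mathbb{C}$-linear bundle map of complex line bundles is an isomorphism. The paper compresses the verification into ``from the construction of $\tilde{\rho}$, it can easily be checked''; your write-up spells that out (oddness, (skew-)symmetry via the (anti-)orthogonality of $J$, anticommutation formally, and $\mathbb{C}$-linearity via $\tilde{\rho}(\eta_2)=i\,\tilde{\rho}(\eta_1)$), which is exactly the content the author is waving at.
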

\begin{proof}
From the construction of $\tilde{\rho}$, it can easily be checked that $\tilde{c}_X$ maps $\mathcal{L}_{\mathit{Cl}}$ to $\mathcal{L}_{\mathcal{H}}$.
Since both $\mathcal{L}_{\mathcal{H}}$ and $\mathcal{L}_{\mathit{Cl}}$ are complex line bundles and $\tilde{c}_X$ is nonzero bundle map, $\tilde{c}_X$ gives the isomorphism between $\mathcal{L}_{\mathcal{H}}$ and  $\mathcal{L}_{\mathit{Cl}}$.
Then the result follows form Lemma \ref{adad}.
\end{proof}
Since $\tilde{c}_X$ is given by the Clifford action, the following theorem is proved.
\begin{theo}
\label{ss}
There is a one-to-one correspondence between the sections of $\mathcal{L}_X$ and the sections of $\mathcal{L}_\mathcal{H}$, 
which preserves supports.
\end{theo}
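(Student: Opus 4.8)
The plan is to read the statement off directly from the bundle isomorphism $\tilde{c}_X\colon \mathcal{L}_X \to \mathcal{L}_{\mathcal{H}}$ produced in Lemma \ref{3016}. First I would note that, by Proposition \ref{ad}, $\tilde{c}_X$ is a morphism of real vector bundles over $X$ which is fiberwise $\mathbb{C}$-linear, and by Lemma \ref{3016} it restricts to an isomorphism of the complex line bundle $\mathcal{L}_X$ onto the complex line bundle $\mathcal{L}_{\mathcal{H}} \subset \mathrm{End}_{\mathbb{R}}(S(X))$. Hence postcomposition $s \mapsto \tilde{c}_X \circ s$ is a bijection from the sections of $\mathcal{L}_X$ onto the sections of $\mathcal{L}_{\mathcal{H}}$, with inverse $t \mapsto \tilde{c}_X^{-1}\circ t$. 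This already yields the one-to-one correspondence.

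It then remains to check that this correspondence preserves supports. For a section $s$ of the line bundle $\mathcal{L}_X$ the support is its zero set $s^{-1}(0)$, which is the quantity assumed compact for $h_{\mathrm{det}}$; for a section $h_{\mathrm{End}}$ of $\mathcal{L}_{\mathcal{H}}$, viewed inside $\mathrm{End}_{\mathbb{R}}(S(X))$, the support in the sense of the footnote to Lemma \ref{k} is the locus where $h_{\mathrm{End}}$ fails to be invertible. I would observe that these two notions agree, because the computation in the proof of Lemma \ref{k} shows $(-1)^{\tau}v^{2}$ is positive (hence invertible) for every nonzero $v$ in a fiber of $\mathcal{L}_{\mathcal{H}}$, so such a $v$ is itself invertible; thus $\mathrm{supp}(h_{\mathrm{End}})=h_{\mathrm{End}}^{-1}(0)$. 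Since $\tilde{c}_X$ is a bundle isomorphism, $(\tilde{c}_X \circ s)(x)=0$ exactly when $s(x)=0$, whence $(\tilde{c}_X \circ s)^{-1}(0)=s^{-1}(0)$ and the supports match.

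I do not anticipate any real difficulty: the theorem is essentially Lemma \ref{3016} restated at the level of sections, and the only input needed beyond it is the remark --- already implicit in Lemma \ref{k} --- that every nonzero element of a fiber of $\mathcal{L}_{\mathcal{H}}$ is an invertible endomorphism of $S(X)$, so that the endomorphism notion of support coincides with the zero set of the corresponding section. If anything deserves a sentence of care it is precisely this identification of the two notions of support, which is why I would spell it out rather than leave it to the phrase ``since $\tilde{c}_X$ is given by the Clifford action''.
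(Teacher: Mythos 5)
Your proposal is correct and follows essentially the same route as the paper, which simply reads the correspondence off the bundle isomorphism $\tilde{c}_X\colon \mathcal{L}_X \to \mathcal{L}_{\mathcal{H}}$ of Lemma \ref{3016} and settles support preservation with the remark that $\tilde{c}_X$ is given by the Clifford action (i.e.\ a nonzero element of a fiber of $\mathcal{L}_{\mathcal{H}}$ has square a nonzero scalar, hence is invertible, so the non-invertibility locus of $h_{\mathrm{End}}$ is exactly the zero set of $h_{\mathrm{det}}$). Your explicit identification of the two notions of support is precisely the content the paper leaves implicit, so no further changes are needed.
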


We are now in a position to define an index of a $4n+2$ dimensional spin$^c$ manifold which is not necessarily closed.
\begin{defi}
\label{teigi}
Let $X$ be a $4n+2$ dimensional spin$^c$ manifold, and $S(X)$ be its spinor bundle.
Let $E \rightarrow X$ be a $\mathbb{Z}_2$-graded hermitian vector bundle which has a structure of a complexification of some real bundle on a complement of some compact set.
Let $h_{\mathrm{det}}$ be a section of $\mathcal{L}_X$ which has a compact support.
We take $h_{\mathrm{End}}=\tilde{c}_X \circ h_{\mathrm{det}}$, which is the section of $\mathcal{L}_\mathcal{H}$ corresponds to $h_{\mathrm{det}}$ by Theorem \ref{ss}.
Then we have an element $\alpha(h_{\mathrm{End}})$ of $K_{\mathrm{cpt}}(TX)$ constructed by the Definition \ref{gen}.
We call an integer given by sending $\alpha(h_{\mathrm{End}})$ to $\mathbb{Z}$ by an index map as an {\em index} of the triple $(X, E, h_{\mathrm{det}})$ and denote by $\mathrm{index}_{\mathbb{R}}(X, E, h_{\mathrm{det}})$.

We also define an indiex of a pair $(X, h_{\mathrm{det}})$ as in the same way, and denote by $\mathrm{index}_{\mathbb{R}}(X, h_{\mathrm{det}})$.
\end{defi}
\begin{rem}
\label{3023}
If $X$ is closed, since $h_{\mathrm{End}}$ is homotopic to the zero section in the space of compactly supported sections of $\mathcal{L}_{\mathcal{H}}$, the element $\alpha(h_{\mathrm{End}})$ in $K_{\mathrm{cpt}}(TX)$ does not depend on the choice of $h_{\mathrm{End}}$.
If we denote by $\mathrm{index}_{\mathbb{C}}(X)$ the index of the Dirac operator of the spin$^c$ structure of $X$, 
then the relation between $\mathrm{index}_{\mathbb{R}}(X, h_{\mathrm{det}})$ and $\mathrm{index}_{\mathbb{C}}(X)$ is given by
\begin{equation*}
\mathrm{index}_{\mathbb{R}}(X, h_{\mathrm{det}}) = \mathrm{index}_{\mathbb{R}}(X, 0) = 2 \cdot \mathrm{index}_{\mathbb{C}}(X),
\end{equation*}
where multiplication by $2$ is derived from the complexification at the construction of $\mathrm{index}_{\mathbb{R}}(X, h_{\mathrm{det}})$.
In this sense, our definition of the index on a $4n+2$ dimensional spin$^c$ manifold with some extra structure gives the generalization of the classical index on a closed spin$^c$ manifold.
\end{rem}

\section{Relation with an Index of a Characteristic Submanifold}

In this section, we prove a formula between the index of a $4n+2$ dimensional spin$^c$ manifold and that of its characteristic submanifold.
We give a proof in two ways, first by the localization of a $K$-class (subsection $4.1$), second by the index formula (subsection $4.2$).

\subsection{Proof by $K$-theory}

In the setting of Definition \ref{teigi}, 
we assume further that $h_{\mathrm{det}}$ intersects transversely to the zero section. 
We have a submanifold $Y$$:=$ $h_{\mathrm{det}}^{-1}(0)$ in $X$, which is known as a characteristic submanifold.
The characteristic submanifold $Y$ has a spin$^c$ structure naturally induced by the spin$^c$ structure of $X$.
This spin$^c$ structure of $Y$ actually is a spin structure because whose determinant line bundle is trivial, so $Y$ is a spin manifold.

In this subsection we prove the following main theorem of this note.
\begin{theo}
\label{bb}
Let $X$ be a $4n+2$ dimensional spin$^c$ manifold.
Suppose that we are given a section $h_{\mathrm{det}}$ of $\mathcal{L}_X$, which intersects transversely to the zero section and $h_{\mathrm{det}}^{-1}(0)$ is compact. 
$Y=h_{\mathrm{det}}^{-1}(0)$ is a characteristic submanifold of $X$.
Then we have the following equation between the index of the pair $(X, h_{\mathrm{det}})$ and the index of the Dirac operator of $Y$.
\vspace{-1mm}
\begin{equation*}
	\mathrm{index}_{\mathbb{R}}(X,h_{\mathrm{det}}) = \mathrm{index}_{\mathbb{C}}(Y)
\end{equation*}
\end{theo}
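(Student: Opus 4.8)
The plan is to compute the class $\alpha(h_{\mathrm{End}}) \in K_{\mathrm{cpt}}(TX)$ by localizing to a tubular neighborhood of the characteristic submanifold $Y = h_{\mathrm{det}}^{-1}(0)$, and to identify the localized class with (a multiple of) the $K$-theory symbol class of the spin Dirac operator on $Y$. First I would use the remark immediately following Definition~\ref{gen} (together with Lemma~\ref{k}): since $\sigma_{X,h_{\mathrm{End}}} = c_X \otimes i + \pi^{\ast}h_{\mathrm{End}} \otimes i^{\tau}$ is invertible away from $\mathrm{supp}(\sigma_{X,h_{\mathrm{End}}}) \cong \mathrm{supp}(h_{\mathrm{End}}) = Y$, the class $\alpha(h_{\mathrm{End}})$ lives in $K_{\mathrm{cpt}}(\pi^{-1}(N))$ for $N$ a tubular neighborhood of $Y$ in $X$. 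By the transversality hypothesis, $N$ is diffeomorphic to the total space of the normal bundle $\nu = \nu(Y \subset X)$, which is a real line bundle whose orientation class is dual to $[Y]$; I would first argue that $\nu$ is in fact trivial (it is the pullback under $h_{\mathrm{det}}$ of the real normal direction, and the section $h_{\mathrm{det}}/|h_{\mathrm{det}}|$ trivializes it on $N \setminus Y$ in the appropriate way), so $N \cong Y \times \mathbb{R}$ and $\pi^{-1}(N) \cong TY \times (\mathbb{R} \times \mathbb{R})$, i.e. $TY$ times a trivial rank-two bundle.

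The core computation is then a splitting of the Clifford data along $Y$. Over $N$, the spinor bundle $S(X)$ decomposes under the Clifford action of the normal and determinant-line directions; concretely I would use the algebraic input from Section~2 — Proposition~\ref{d} and Lemma~\ref{dd}, Remark~\ref{ddd}, together with the identification $\mathcal{L}_X \cong \mathcal{L}_{\mathcal{H}}$ of Lemma~\ref{3016} — to write $S(X)|_Y$ as $S(Y) \hat{\otimes} S_2$ (or the appropriate twist), with $c_X$ restricting to $c_Y \otimes 1$ in the $TY$ directions and the perturbation $\pi^{\ast}h_{\mathrm{End}}\otimes i^{\tau}$ together with Clifford multiplication by the normal covector accounting for the remaining two Clifford generators. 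The point is that near $Y$ the operator $\sigma_{X,h_{\mathrm{End}}}$ looks like $c_Y \otimes i \;\hat{+}\; (\text{Bott element in the two extra variables})$, so that $\alpha(h_{\mathrm{End}})$ is the external product of the symbol class of the $Y$-Dirac operator with the Bott class of $\mathbb{R}^2$. Under the Thom isomorphism $K_{\mathrm{cpt}}(TY) \xrightarrow{\sim} K_{\mathrm{cpt}}(TY \times \mathbb{R}^2)$ this says $\alpha(h_{\mathrm{End}})$ corresponds exactly to the symbol class of the spin Dirac operator on $Y$ (the complexification accounting for the real-vs-complex bookkeeping, as in Remark~\ref{3023}, and for $Y$ being genuinely spin so that the factor is just the complex $K$-class). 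Applying the index map and using Atiyah--Singer on $Y$, $\mathrm{index}_{\mathbb{R}}(X, h_{\mathrm{det}}) = \mathrm{index}_{\mathbb{C}}(Y)$.

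I expect the main obstacle to be the local Clifford-algebraic bookkeeping in the previous paragraph: making precise, in a way compatible with the $\mathbb{Z}_2$-gradings, the real/quaternionic structure $J$, and the various powers of $i$ and $i^\tau$, that the restriction of $\sigma_{X,h_{\mathrm{End}}}$ to a neighborhood of $Y$ is homotopic through compactly-supported symbols to the clean external product $\sigma_{Y,\mathrm{Dirac}} \,\hat\otimes\, (\text{Bott})$, and in particular getting the multiplicities exactly right so that no spurious factor of $2$ survives (contrast with the closed case of Remark~\ref{3023}, where the class is $2\,\mathrm{index}_{\mathbb{C}}(X)$). A secondary technical point is verifying compactness/transversality carefully enough that the excision to $\pi^{-1}(N)$ and the triviality of $\nu$ are legitimate, but those are routine once the tubular neighborhood theorem is invoked. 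The remaining steps — naturality of the Thom isomorphism, multiplicativity of the index map under external product, and the identification of the induced spin$^c$ structure on $Y$ as a spin structure (already noted in the text) — I would treat as standard.
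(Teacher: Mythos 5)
Your overall strategy (localize $\alpha(h_{\mathrm{End}})$ to a tubular neighborhood of $Y$, split the Clifford data along $Y$, and recognize the symbol of the $Y$-Dirac operator times a Thom/Bott factor in the normal directions) is the same as the paper's, but there is a concrete error at the start that breaks the execution: $\mathcal{L}_X$ is the determinant line bundle of the spin$^c$ structure, hence a \emph{complex} line bundle (real rank $2$), so a transverse zero set $Y=h_{\mathrm{det}}^{-1}(0)$ has codimension $2$ and dimension $4n$, not codimension $1$. Your claim that the normal bundle $\nu$ is a real line bundle, trivialized by $h_{\mathrm{det}}/|h_{\mathrm{det}}|$, so that $N\cong Y\times\mathbb{R}$ and $\pi^{-1}(N)\cong TY\times\mathbb{R}^2$, is false; note also that a codimension-one $Y$ would be odd dimensional, $\mathrm{index}_{\mathbb{C}}(Y)$ would vanish, and the theorem could not hold. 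In fact $\nu\cong N\cong\mathcal{L}_X|_Y$ as a complex line bundle over $Y$, and it is in general nontrivial (its Euler class is $c_1(\mathcal{L}_X)|_Y$, i.e.\ the self-intersection of the characteristic submanifold); what is trivial is the determinant line of the \emph{induced} spin$^c$ structure on $Y$, which is why $Y$ is spin, not the normal bundle itself.

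Consequently the step ``$\alpha(h_{\mathrm{End}})$ is the external product of the $Y$-symbol with the Bott class of $\mathbb{R}^2$'' does not go through: the normal data form the generally nontrivial complex line bundle $TN\cong\pi_1^{\ast}\nu\oplus\pi_1^{\ast}\nu\cong\pi_1^{\ast}\nu\otimes\mathbb{C}$ over $TY$, and you must show $\alpha(h_{\mathrm{End}})|_N=v\cdot\lambda_{TN}$ with $\lambda_{TN}$ the Thom class of that bundle, then use compatibility of the topological index with the Thom homomorphism, $\mathrm{index}_{\mathbb{C}}(Y)=\mathrm{index}_{\mathbb{C}}(v)=\mathrm{index}_{\mathbb{C}}(v\cdot\lambda_{TN})$, rather than plain Bott periodicity for a product with $\mathbb{R}^2$. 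With that correction, the rest of your plan (the decomposition $S(X)|_Y\cong S(Y)\hat{\otimes}\Lambda^{\ast}_{\mathbb{C}}\pi^{\ast}\nu$, the identification of $h_{\mathrm{End}}|_N$ with $J(-1)^{\mathrm{deg}}\otimes(u_b^{\wedge}+u_b^{\lrcorner})$ via the tautological section of $\pi^{\ast}\nu$, and the real/complex bookkeeping that prevents a spurious factor of $2$) is exactly the content of the paper's Steps 1--3 and Lemma \ref{416}, so the fix brings your argument into line with the paper's proof rather than providing an alternative route.
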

\vspace{-1mm}
The twisted case is as follows.
\vspace{-1mm}
\begin{theo}
\label{bc}
Let $X$ be a $4n+2$ dimensional spin$^c$ manifold.
Suppose that we are given a section $h_{\mathrm{det}}$ of $\mathcal{L}_X$, which intersects transversely to the zero section and $h_{\mathrm{det}}^{-1}(0)$ is compact.
$Y=h_{\mathrm{det}}^{-1}(0)$ is a characteristic submanifold of $X$.
Let $E \rightarrow X$ be a $\mathbb{Z}_2$-graded hermitian vector bundle which has a structure of a complexification of a real vector bundle on $X$.
Then we have the following equation between the index of the triple $(X, E, h_{\mathrm{det}})$ and the index of the Dirac operator of $Y$ twisted by $E|_Y$. 
\begin{equation*}
	\mathrm{index}_{\mathbb{R}}(X, E, h_{\mathrm{det}}) = \mathrm{index}_{\mathbb{C}}(Y, E|_{Y}).
\end{equation*}
\end{theo}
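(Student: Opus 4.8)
The plan is to localize the class $\alpha(E,h_{\mathrm{End}})\in K_{\mathrm{cpt}}(TX)$ to a tubular neighbourhood of $Y$ and to identify it, through a Thom isomorphism along the normal directions, with the symbol class of the twisted spin Dirac operator of $Y$; naturality of the topological index then gives the formula. Here $h_{\mathrm{End}}$ is the section of $\mathcal{L}_{\mathcal{H}}$ corresponding to $h_{\mathrm{det}}$ via Theorem \ref{ss}, and since $E$ is the complexification of a real bundle on \emph{all} of $X$ one may take the compact set $K$ empty, so that $h'_{\mathrm{End}}=h_{\mathrm{End}}\otimes s$ with $s$ the fibrewise complex conjugation of $E$, invertible everywhere; thus $\mathrm{supp}(h'_{\mathrm{End}})=\mathrm{supp}(h_{\mathrm{End}})=Y$. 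By Lemma \ref{k} the support of $\sigma_{X,E,h_{\mathrm{End}}}$ is homeomorphic to $\mathrm{supp}(h_{\mathrm{End}})$, and, $h_{\mathrm{det}}$ being transverse to the zero section, it is the zero section of $TY\subset TX|_Y$. Hence $\alpha(E,h_{\mathrm{End}})$ lies in the image of $K_{\mathrm{cpt}}(TV)\to K_{\mathrm{cpt}}(TX)$ for $V$ a tubular neighbourhood of $Y$ in $X$.

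First I would identify $V$ with the total space of the normal bundle $N$ of $Y$ in $X$; transversality of $h_{\mathrm{det}}$ gives $N\cong\mathcal{L}_X|_Y$, so $N$ is a complex line bundle and $Y$ is spin (its induced determinant bundle being trivial). Using a metric connection I would split $TX|_V\cong\pi_N^{\ast}(TY\oplus N)$ and, from the $Spin^c$ structures, $S(X)|_V\cong\pi_N^{\ast}\bigl(S(Y)\,\hat{\otimes}\,S(N)\bigr)$ compatibly with Clifford multiplication, where $S(N)$ is the $\mathbb{Z}_2$-graded spinor bundle of the oriented plane bundle $N$. Under these identifications $c_X$ splits into a $TY$-part equal to $c_Y\,\hat{\otimes}\,1$ and an $N$-part built from $c_N$, while by $\mathcal{L}_{\mathcal{H}}\cong\mathcal{L}_X$ together with Definition \ref{gg} and Proposition \ref{d}, the perturbation $\pi^{\ast}h'_{\mathrm{End}}$ becomes, up to a compactly supported homotopy (replacing $h_{\mathrm{det}}$ by its linearization along $Y$), Clifford multiplication by the normal coordinate on the $S(N)$-factor tensored with $s$ on the $E$-factor.

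The core of the argument is to show that the part of $\sigma_{X,E,h_{\mathrm{End}}}$ in the normal directions --- the $N$-part of $c_X\otimes i$ together with the perturbation, acting on the complexified normal-spinor factor tensored with $E$ and parametrised by the pair consisting of a normal covector and a normal coordinate, which together sweep out a complex rank two bundle $N\oplus N$ over $Y$ --- represents the \emph{complex} Thom class of that bundle. Here one uses the explicit model for $\tilde\rho$ from Proposition \ref{d} and the real or quaternionic structure $J$ of Lemma \ref{202}: the complexification $S(N)\otimes_{\mathbb{R}}\mathbb{C}$ decomposes as $S(N)\oplus\overline{S(N)}$, the antilinear perturbation exchanges these two summands, and tensoring with the antilinear conjugation $s$ of $E$ makes the combined operator complex linear, producing a Bott-type symbol for $N\oplus N$. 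What remains is $c_Y(\xi_{TY})\otimes i$ on $\pi^{\ast}S(Y)$ twisted by $E|_Y$, i.e.\ the symbol of the spin Dirac operator of $Y$ twisted by $E|_Y$, whose class $\alpha_{\mathbb{C}}(Y,E|_Y)\in K_{\mathrm{cpt}}(TY)$ has topological index $\mathrm{index}_{\mathbb{C}}(Y,E|_Y)$. Thus under the Thom isomorphism $K_{\mathrm{cpt}}(TY)\cong K_{\mathrm{cpt}}(TY\oplus N\oplus N)=K_{\mathrm{cpt}}(TV)$ the class $\alpha_{\mathbb{C}}(Y,E|_Y)$ is carried to $\alpha(E,h_{\mathrm{End}})$ restricted to $TV$. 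This also accounts for the vanishing of the factor $2$ of Remark \ref{3023}: heuristically, the two copies of the spinor bundle produced by complexifying on $X$ are reassembled along $N=\mathcal{L}_X|_Y$ into a single complex spinor bundle on $Y$.

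It then remains to invoke naturality of the topological index: the index map $K_{\mathrm{cpt}}(TX)\to\mathbb{Z}$ is compatible with the open inclusion $V\hookrightarrow X$ and with the Thom isomorphism along $N\oplus N$ --- the usual excision and Bott periodicity properties of the Atiyah--Singer topological index, applicable here because every class involved has compact support although $X$ may be open. Hence
\[
\mathrm{index}_{\mathbb{R}}(X,E,h_{\mathrm{det}})=\mathrm{index}\,\alpha(E,h_{\mathrm{End}})=\mathrm{index}\,\alpha_{\mathbb{C}}(Y,E|_Y)=\mathrm{index}_{\mathbb{C}}(Y,E|_Y),
\]
and Theorem \ref{bb} is the special case where $E$ is the trivial complex line bundle. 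The step I expect to be the main obstacle is the identification of the normal and perturbation part with the \emph{complex} Thom class of $N\oplus N$: one must reconcile all the $\mathbb{Z}_2$-gradings, the factors $i^{\tau}$, the antilinear structures on $S(N)$ and on $E$, and the explicit form of $\tilde\rho$, so that exactly one complex Dirac operator of $Y$ survives the localization --- neither two copies, nor merely a $KO$-theoretic refinement of it.
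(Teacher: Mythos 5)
Your proposal follows essentially the same route as the paper: localize $\alpha(E,h_{\mathrm{End}})$ to the tubular neighbourhood $N\cong\mathcal{L}_X|_Y$, split $S(X)|_N\cong S(Y)\hat{\otimes}S(\nu)$, identify the perturbation with Clifford-type multiplication by the normal coordinate, and recognize the normal-covector-plus-coordinate part of the complexified symbol as the complex Thom class of the rank-two bundle $\pi_1^{\ast}\nu\otimes_{\mathbb{R}}\mathbb{C}$ over $TY$ (the paper's Steps 1--3 and Lemma \ref{416}, which you correctly single out as the crux, settled there by uniqueness of the irreducible $\mathbb{C}l_4$-representation), before concluding by naturality of the topological index. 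Your sketch of that crux via $S(N)\otimes_{\mathbb{R}}\mathbb{C}\cong S(N)\oplus\overline{S(N)}$ with the antilinear perturbation swapping summands is a reasonable variant of the same identification, so the plan is correct and matches the paper's proof.
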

\begin{rem}
\label{44}
If $X$ is closed, by Remark \ref{3023}, we have
$
2 \cdot \mathrm{index}_{\mathbb{C}}(X) = \mathrm{index}_{\mathbb{C}}(Y).
$
\end{rem}
The tubular neighborhood theorem says that we can take a tubular neighborhood $N$ of $Y$ in $X$ such that $N \cong \mathcal{L}_X|_Y$.
To avoid confusion, we denote by $\pi \colon \nu \rightarrow Y$ when we consider $N$ as a complex line bundle over $Y$, so we have $\nu \cong N \cong \mathcal{L}_X|_Y$.
We denote by $\tilde{\pi} \colon TN \rightarrow TY$ for the bundle projection induced by $\pi$, and denote by
$\pi_3 \colon TX \rightarrow X$, 
$\pi_2 \colon TN \rightarrow N$, and
$\pi_1 \colon TY \rightarrow Y$ 
for projections of tangent bundles.
\[\xymatrix{
TY \ar[d]_{\pi_1}& TN \ \ar[d]^{\pi_2} \ar[l]_{\tilde{\pi}} \ar @{^{(}->}[r] \ar[ld]_{\hat{\pi}}& TX \ar[d]^{\pi_3}\\
Y & N \ \ar[l]^{\pi} \ar @{^{(}->}[r] & X 
}\]

For the bundle $TN \rightarrow TY$ there is an isomorphism $TN \cong \pi_1^{\ast}\nu \oplus \pi_1^{\ast}\nu$.
We regard left $\pi_1^{\ast}\nu$ as a base direction component, and right $\pi_1^{\ast}\nu$ a fiber direction component.
We express one point $u \in TN$ by four components $u=(y, \xi, u_b, u_f)$, 
where $y$ is an element of $Y$, 
$\xi$ is a fiber component of $\pi_1 \colon TY \rightarrow Y$, 
$u_b$ is a base direction element of a fiber of $\tilde{\pi} \colon TN \rightarrow TY$, and
$u_f$ is a fiber direction element of a fiber of $\tilde{\pi} \colon TN \rightarrow TY$.
We define a complex structure on $TN$ by $i \cdot (u_b, u_f) := (-u_f, u_b)$.
In this sense, $TN$ can be seen as a complexification $TN \cong \pi_1^{\ast}\nu \otimes \mathbb{C}$, and 
$\tilde{\pi} \colon TN \rightarrow TY$ thus has a complex vector bundle structure.

Let $\lambda_{TN}$ be the Thom class of the complex vector bundle $TN \rightarrow TY$.
In order to prove Theorem \ref{bb}, it is enough to show $\alpha(h_{\mathrm{End}})|_N=v\cdot \lambda_{TN}$ 
as an element of $K_{\mathrm{cpt}}(TN)$ since we have $\mathrm{index}_{\mathbb{C}}(Y) = \mathrm{index}_{\mathbb{C}}(v) = \mathrm{index}_{\mathbb{C}}(v \cdot \lambda_{TN})$ for  $v=[\pi_1^{\ast}S^0(Y), \pi_1^{\ast}S^1(Y) \colon i\cdot c_Y]$ $\in$ $K_{\mathrm{cpt}}(TY)$.
Therefore, it is enough to show that the following two elements in $K_{\mathrm{cpt}}(TN)$ coincide.
\vspace{-2mm}
\[\xymatrix{
\pi_2^{\ast}S(X)|_N \underset{\mathbb{R}}{\otimes} \mathbb{C}
\ar[rrrrr]^{c_X \otimes i + \pi_2^{\ast}h_{\mathrm{End}} \otimes i^{\tau}}
&&&&&
\pi_2^{\ast}S(X)|_N \underset{\mathbb{R}}{\otimes} \mathbb{C}
}\]
\vspace{-4mm}
\[\xymatrix{
\hat{\pi}^{\ast}S(Y) 
	\underset{\mathbb{C}}{\hat{\otimes}}
		\tilde{\pi}^{\ast}\Lambda_{\mathbb{C}}^{\ast}TN
\ar[rrrr]^{i\tilde{\pi}^{\ast}c_Y \otimes 1 + (-1)^{\mathrm{deg}} \otimes (\wedge + \lrcorner)}
&&&&
\hat{\pi}^{\ast}S(Y) 
	\underset{\mathbb{C}}{\hat{\otimes}}
		\tilde{\pi}^{\ast}\Lambda_{\mathbb{C}}^{\ast}TN
}\]

The proof is devided into three steps.

\noindent
\underline{\bf{Step 1}.}
We have a relation $S(X)|_Y \cong S(Y) \hat{\otimes} S(\nu) \cong S(Y) \hat{\otimes} \Lambda_{\mathbb{C}}^{\ast}\pi^{\ast}\nu$ between spinors, 
and pull-back by $\pi_2$ gives 
$\pi_2^{\ast}S(X)|_N
	\cong
\hat{\pi}^{\ast}S(Y) \hat{\otimes} \tilde{\pi}^{\ast}\Lambda_{\mathbb{C}}^{\ast}\pi_1^{\ast}\nu$.
On both sides, there is a Clifford action of $\mathit{Cl}(N)$.
Since $\mathit{Cl}(N)_{u_b}$ is generated by the components of $\xi$ and $u_f$,
the relation of these two actions is given by
\vspace{-2mm}
\begin{equation*}
	c_X(u) = \tilde{\pi}^{\ast}c_Y(\xi) \otimes 1 + (-1)^{\mathrm{deg}} \otimes (u_f^{\wedge}- u_f^{\lrcorner}).
\end{equation*}
\vspace{-2mm}
Therefore we have the following commutative diagram.
\[\xymatrix{
\pi_2^{\ast}S(X)|_N \underset{\mathbb{R}}{\otimes} \mathbb{C}
\ar[rrrrr]^{c_X\otimes i}
\ar[d]_{\cong}
	&&&&&
\pi_2^{\ast}S(X)|_N \underset{\mathbb{R}}{\otimes} \mathbb{C}
\ar[d]^{\cong}
	\\
\Bigl( \hat{\pi}^{\ast}S(Y)\underset{\mathbb{C}}{\hat{\otimes}}
	\tilde{\pi}^{\ast}\Lambda_{\mathbb{C}}^{\ast}\pi_1^{\ast}\nu \Bigl) \underset{\mathbb{R}}{\otimes} \mathbb{C}
\ar[rrrrr]^{ \bigl(\tilde{\pi}^{\ast}c_Y \otimes 1 + (-1)^{\mathrm{deg}} \otimes (u_f^{\wedge}- u_f^{\lrcorner})\bigl)\otimes i }
\ar[d]_{\cong}
	&&&&&
\Bigl( \hat{\pi}^{\ast}S(Y)\underset{\mathbb{C}}{\hat{\otimes}}
	\tilde{\pi}^{\ast}\Lambda_{\mathbb{C}}^{\ast}\pi_1^{\ast}\nu \Bigl) \underset{\mathbb{R}}{\otimes} \mathbb{C}
\ar[d]^{\cong}
	\\
\hat{\pi}^{\ast}S(Y)\underset{\mathbb{C}}{\hat{\otimes}}
	\Bigl(\tilde{\pi}^{\ast}\Lambda_{\mathbb{C}}^{\ast}\pi_1^{\ast}\nu \underset{\mathbb{R}}{\otimes} \mathbb{C}\Bigl)
\ar[rrrrr]^{i\tilde{\pi}^{\ast}c_Y \otimes (1 \otimes 1) + (-1)^{\mathrm{deg}}\otimes \bigl( (u_{f}^{\wedge}-u_{f}^{\lrcorner})\otimes i \bigl)}
	&&&&&
\hat{\pi}^{\ast}S(Y)\underset{\mathbb{C}}{\hat{\otimes}}
	\Bigl(\tilde{\pi}^{\ast}\Lambda_{\mathbb{C}}^{\ast}\pi_1^{\ast}\nu \underset{\mathbb{R}}{\otimes} \mathbb{C}\Bigl)
}\]
where the second of the vertical isomorphism is given by
\vspace{-1mm}
\begin{equation*}
(x \underset{\mathbb{C}}{\otimes}y)\underset{\mathbb{R}}{\otimes}z 
	\longmapsto 
\frac{J+1}{2}x \underset{\mathbb{C}}{\otimes} \bigl(y \underset{\mathbb{R}}{\otimes}z \bigl) + \frac{J-1}{2}x \underset{\mathbb{C}}{\otimes} \bigl( iy \underset{\mathbb{R}}{\otimes}iz \bigl).
\end{equation*}

\noindent
\underline{\bf{Step 2}.}
Through the isomorphism $\mathcal{L}_X|_N \cong \pi^{\ast}\nu$, 
$h_{\mathrm{det}}|_N$ corresponds to the tautological section $t$ of $\pi^{\ast}\nu \rightarrow N$, 
i.e., $t(y, u_b) = u_b$.
This follows that, through the isomorphism
$\mathrm{End}_{\mathbb{R}}(S(X)|_N)
	\cong
		\mathrm{End}_{\mathbb{R}}(\pi^{\ast}S(Y)\hat{\otimes} \Lambda_{\mathbb{C}}^{\ast} \pi^{\ast} \nu)$, 
$h_{\mathrm{End}}|_N$ corresponds to $ J(-1)^{\mathrm{deg}} \otimes ( u_b^{\wedge} + u_b^{\lrcorner} )$
(see the proof of Proposition \ref{d}).
Thus we have the following diagram.
\[\xymatrix{
\pi_2^{\ast}S(X)|_N \underset{\mathbb{R}}{\otimes} \mathbb{C}
\ar[rrr]^{\pi_2^{\ast}h_{\mathrm{End}}\otimes i^{\tau}}
\ar[d]_{\cong}
	&&&
\pi_2^{\ast}S(X)|_N \underset{\mathbb{R}}{\otimes} \mathbb{C}
\ar[d]^{\cong}
	\\
\Bigl( \hat{\pi}^{\ast}S(Y)\underset{\mathbb{C}}{\hat{\otimes}}
	\tilde{\pi}^{\ast}\Lambda_{\mathbb{C}}^{\ast}\pi_1^{\ast}\nu \Bigl) \underset{\mathbb{R}}{\otimes} \mathbb{C}
\ar[rrr]^{ \bigl( J(-1)^{\mathrm{deg}} \otimes (u_b^{\wedge} + u_b^{\lrcorner}) \bigl) \otimes i^{\tau} }
\ar[d]_{\cong}
	&&&
\Bigl( \hat{\pi}^{\ast}S(Y)\underset{\mathbb{C}}{\hat{\otimes}}
	\tilde{\pi}^{\ast}\Lambda_{\mathbb{C}}^{\ast}\pi_1^{\ast}\nu \Bigl) \underset{\mathbb{R}}{\otimes} \mathbb{C}
\ar[d]^{\cong}
	\\
\hat{\pi}^{\ast}S(Y)\underset{\mathbb{C}}{\hat{\otimes}}
	\Bigl(\tilde{\pi}^{\ast}\Lambda_{\mathbb{C}}^{\ast}\pi_1^{\ast}\nu \underset{\mathbb{R}}{\otimes} \mathbb{C}\Bigl)
\ar[rrr]^{ (-1)^{\mathrm{deg}} \otimes \bigl( i^{\tau}(u_b^{\wedge} + u_b^{\lrcorner}) \otimes 1 \bigl)}
	&&&
\hat{\pi}^{\ast}S(Y)\underset{\mathbb{C}}{\hat{\otimes}}
	\Bigl(\tilde{\pi}^{\ast}\Lambda_{\mathbb{C}}^{\ast}\pi_1^{\ast}\nu \underset{\mathbb{R}}{\otimes} \mathbb{C}\Bigl)
}\]

\noindent
\underline{\bf{Step 3}}.
By Step $1$ and Step $2$, the following diagram is obtained.
\vspace{-1mm}
\[\xymatrix{
\pi_2^{\ast}S(X)|_N  \underset{\mathbb{R}}{\otimes} \mathbb{C}
\ar[rrrr]^{c_X \otimes i + \pi_2^{\ast}h_{\mathrm{End}} \otimes i^{\tau}}
\ar[d]_{\cong}
	&&&&
\pi_2^{\ast}S(X)|_N  \underset{\mathbb{R}}{\otimes}{\mathbb{C}}
\ar[d]^{\cong} \\
\hat{\pi}^{\ast}S(Y) 
	\underset{\mathbb{C}}{\hat{\otimes}}
		\Bigl( \tilde{\pi}^{\ast} \Lambda_{\mathbb{C}}^{\ast} \pi_1^{\ast} \nu
			\underset{\mathbb{R}}{\otimes}{\mathbb{C}} \Bigl)
\ar[rrrr]^{\theta}
	&&&&
\hat{\pi}^{\ast}S(Y) 
	\underset{\mathbb{C}}{\hat{\otimes}}
		\Bigl( \tilde{\pi}^{\ast} \Lambda_{\mathbb{C}}^{\ast} \pi_1^{\ast} \nu
			\underset{\mathbb{R}}{\otimes}{\mathbb{C}} \Bigl)
}\]
where
$
\theta = i\tilde{\pi}^{\ast}c_Y \otimes (1 \otimes 1) +  (-1)^{\mathrm{deg}} \otimes \Bigl( i^{\tau}(u_b^{\wedge} + u_b^{\lrcorner}) \otimes 1 + (u_f^{\wedge}- u_f^{\lrcorner}) \otimes i\Bigl).
$

\vspace{1mm}
What is left to show is the following lemma.
\vspace{-1mm}
\begin{lem}
\label{416}
There is an isomorphism 
$\tilde{\pi}^{\ast} \Lambda_{\mathbb{C}}^{\ast}\pi_1^{\ast}\nu \underset{\mathbb{R}}{\otimes} \mathbb{C}
\cong
\tilde{\pi}^{\ast} \Lambda_{\mathbb{C}}^{\ast} \Bigl( \pi_1^{\ast} \nu
			\underset{\mathbb{R}}{\otimes}{\mathbb{C}} \Bigl)$ 
which commutes the following diagram.
\vspace{-1mm}
\[\xymatrix{
\tilde{\pi}^{\ast} \Lambda_{\mathbb{C}}^{\ast}\pi_1^{\ast}\nu \underset{\mathbb{R}}{\otimes} \mathbb{C}
\ar[d]_{\cong}
\ar[rrrr]^{i^{\tau}(u_b^{\wedge} + u_b^{\lrcorner}) \otimes 1 + (u_f^{\wedge}- u_f^{\lrcorner}) \otimes i}
	&&&&
\tilde{\pi}^{\ast} \Lambda_{\mathbb{C}}^{\ast}\pi_1^{\ast}\nu \underset{\mathbb{R}}{\otimes} \mathbb{C}
\ar[d]^{\cong}\\
\tilde{\pi}^{\ast} \Lambda_{\mathbb{C}}^{\ast} \Bigl( \pi_1^{\ast} \nu
			\underset{\mathbb{R}}{\otimes}{\mathbb{C}} \Bigl)
\ar[rrrr]^{(\wedge + \lrcorner)(u_b, u_f)}
	&&&&
\tilde{\pi}^{\ast} \Lambda_{\mathbb{C}}^{\ast} \Bigl( \pi_1^{\ast} \nu
			\underset{\mathbb{R}}{\otimes}{\mathbb{C}} \Bigl)
}\]
\end{lem}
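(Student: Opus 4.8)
The plan is to reduce the statement to fibrewise linear algebra and then exhibit the isomorphism through a common model. Since the construction is natural in the complex line bundle $\nu$ and the diagram only involves the fibre coordinates $u_b,u_f$, it suffices to treat a single one–dimensional hermitian complex vector space $L$ (the fibre of $\nu$) and to produce a $\mathbb{C}$–linear isomorphism
\[
\Phi\colon \Lambda_{\mathbb{C}}^{\ast}L\otimes_{\mathbb{R}}\mathbb{C}\ \longrightarrow\ \Lambda_{\mathbb{C}}^{\ast}(L\otimes_{\mathbb{R}}\mathbb{C})
\]
which, for every $a=u_b$ and $b=u_f$ in $L$, intertwines $i^{\tau}(a^{\wedge}+a^{\lrcorner})\otimes 1+(b^{\wedge}-b^{\lrcorner})\otimes i$ with $(\wedge+\lrcorner)(a,b)$; applying $\tilde{\pi}^{\ast}$ to the resulting isomorphism over $TY$ then gives the lemma.

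First I would set up the common model. On the target, the two commuting complex structures on the underlying real space $L\oplus L$ of $M:=L\otimes_{\mathbb{R}}\mathbb{C}$ — the one $i(a,b)=(-b,a)$ defining $M$ and the one coming from $J$ on each summand — split $M=M^{1,0}\oplus M^{0,1}$, with $M^{1,0}\cong L$, $M^{0,1}\cong\overline{L}$, and $\Lambda_{\mathbb{C}}^{\ast}M\cong\Lambda_{\mathbb{C}}^{\ast}M^{1,0}\otimes_{\mathbb{C}}\Lambda_{\mathbb{C}}^{\ast}M^{0,1}$; under this the vector $(a,b)$ is governed by $a+Jb\in L\cong M^{1,0}$ and $a-Jb\in\overline{L}\cong M^{0,1}$. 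On the source, recall from Step 1 of the proof of Proposition \ref{d} that $\Lambda_{\mathbb{C}}^{\ast}L$ is the real exterior algebra $\Lambda_{\mathbb{R}}^{\ast}L_{\mathbb{R}}$ equipped with the complex structure $-\rho_{2,3}(e_1 e_2 \epsilon_1)$, and that in that picture $a^{\wedge}+a^{\lrcorner}$ is a positive–square ($\epsilon$–type) Clifford generator while $b^{\wedge}-b^{\lrcorner}$ is a negative–square ($e$–type) one. Since complexification commutes with exterior powers, $\Lambda_{\mathbb{C}}^{\ast}L\otimes_{\mathbb{R}}\mathbb{C}=\Lambda_{\mathbb{R}}^{\ast}L_{\mathbb{R}}\otimes_{\mathbb{R}}\mathbb{C}\cong\Lambda_{\mathbb{C}}^{\ast}(L_{\mathbb{R}}\otimes_{\mathbb{R}}\mathbb{C})=\Lambda_{\mathbb{C}}^{\ast}M$, and I would take $\Phi$ to be this composite, equivalently writing it on the basis determined by a unit vector $\mathbf{e}\in L$.

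It then remains to check that $\Phi$ intertwines the two operators, which I would do by a direct computation on that basis. The mechanism is that, after complexifying and comparing with the splitting $\Lambda_{\mathbb{C}}^{\ast}M\cong\Lambda_{\mathbb{C}}^{\ast}M^{1,0}\otimes_{\mathbb{C}}\Lambda_{\mathbb{C}}^{\ast}M^{0,1}$, the ``real part'' $i^{\tau}(a^{\wedge}+a^{\lrcorner})\otimes 1$ and the ``imaginary part'' $(b^{\wedge}-b^{\lrcorner})\otimes i$ assemble into exterior multiplication by $a+Jb$ together with (hermitian) contraction by the same vector, and similarly on the $M^{0,1}$–factor; that is, into $v^{\wedge}+v^{\lrcorner}$ with $v=(a,b)$, which is by definition $(\wedge+\lrcorner)(a,b)$. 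The exponent $\tau$ is exactly the adjustment that makes this uniform in the $8n+2$ ($\tau=0$) and $8n+6$ ($\tau=1$) cases, i.e. in the real and the quaternionic cases.

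I expect the main obstacle to be bookkeeping rather than ideas. Three complex structures are in play at once — the intrinsic one of $\nu$, the one on $TN\cong\nu\otimes_{\mathbb{R}}\mathbb{C}$ used to form the Thom class $\lambda_{TN}$, and the auxiliary one $-\rho_{2,3}(e_1 e_2 \epsilon_1)$ from Proposition \ref{d} — and one must track carefully how $\wedge$, $\lrcorner$, the factor $i^{\tau}$, and the two tensor slots $\otimes 1$ and $\otimes i$ transform under each identification, including signs and factors of $i$. The one genuinely substantive point is the observation that the superficially dissimilar operators $a^{\wedge}+a^{\lrcorner}$ and $b^{\wedge}-b^{\lrcorner}$ are precisely the two types of Clifford generators in the $\rho_{2,3}$–model, which is what lets the complexification fuse them into a single Thom–type symbol; once the generators are matched, the whole diagram follows, since both composite operators are determined by their values as $a,b$ range over a real basis of $L$.
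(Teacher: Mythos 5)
Your reduction to a single hermitian line $V$ (the fibre of $\nu$) is the same opening move as the paper's, but from there the two arguments diverge. The paper's proof is representation-theoretic and essentially one line: it observes that both horizontal maps, regarded as functions of $(u_b,u_f)\in V\oplus V$, endow a $4$-dimensional complex space with the structure of a $\mathbb{C}$-linear Clifford module over $\mathit{Cl}(V\oplus V)\otimes\mathbb{C}\cong\mathbb{C}l_4$, and then invokes the uniqueness of the irreducible representation of $\mathbb{C}l_4$ (which also has dimension $4$) to conclude the two modules are isomorphic --- no explicit intertwiner is ever written down. You instead propose to \emph{construct} the intertwiner $\Phi$ as the canonical complexification isomorphism $\Lambda_{\mathbb{R}}^{\ast}L_{\mathbb{R}}\otimes_{\mathbb{R}}\mathbb{C}\cong\Lambda_{\mathbb{C}}^{\ast}(L_{\mathbb{R}}\otimes_{\mathbb{R}}\mathbb{C})$, reading $\Lambda_{\mathbb{C}}^{\ast}L$ through the $\rho_{2,3}$-model of Proposition~\ref{d}, and then to verify the commutativity by a basis computation. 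This is a legitimate alternative and your identification of $a^{\wedge}+a^{\lrcorner}$ and $b^{\wedge}-b^{\lrcorner}$ as the two types of Clifford generators in that model is the right observation, but there are two caveats. First, the decisive verification --- that the \emph{canonical} $\Phi$, and not a twist of it, actually intertwines $i^{\tau}(u_b^{\wedge}+u_b^{\lrcorner})\otimes 1+(u_f^{\wedge}-u_f^{\lrcorner})\otimes i$ with $(\wedge+\lrcorner)(u_b,u_f)$, keeping careful track of the $i^{\tau}$ phase and of which tensor slot each $i$ lives in --- is only sketched qualitatively and not carried out; it is not evident without calculation that the most natural isomorphism is the correct one without adjustment. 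Second, you do not make the irreducibility observation that does all the work in the paper; Schur's lemma is precisely what lets the paper sidestep the need to exhibit $\Phi$ at all. Either complete the explicit check (verifying along the way that each side really satisfies the Clifford relation $\sigma(u)^2=|u|^2\cdot 1$, which the paper also leaves implicit), or graft the uniqueness-of-irrep argument onto your setup and drop the explicit $\Phi$; as it stands the proposal has a genuine gap at its crux.
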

\vspace{-3mm}
\begin{proof}
\label{kakan}
Since principal bundles of two vector bundles are the same, 
it is enough to show that, 
for a complex $1$ dimensional vector space $V$ with hermitian metric, 
there is an isomorphism between 
$\Lambda_{\mathbb{C}}^{\ast} V \otimes \mathbb{C}$ and $\Lambda_{\mathbb{C}}^{\ast} \bigl( V \otimes \mathbb{C} \bigl)$ which commutes the corresponding diagram.
Both of the horizontal arrows are complex representations of $\mathit{Cl}(V \oplus V) \cong \mathit{Cl}_{0,4}$, and so are irreducible representations of $\mathit{Cl}(V \oplus V)\otimes \mathbb{C} \cong\mathbb{C}l_4$ since both dimensions of representations are 4.
The existence of the desired isomorphism follows from the uniqueness of the irreducible representation of $\mathbb{C}l_4$.
\end{proof}
\vspace{-4mm}

Thus we proved Theorem \ref{bb}. Theorem \ref{bc} can be proved in the same line, using the real or quaternionic structure of $S(X)\hat{\otimes} E$.
\vspace{-1mm}
\subsection{Proof by the Index Formula}
In this subsection, we prove the following theorem by the index formula.
\begin{theo}
\label{f}
Let $X$ be a $4n+2$ dimensional closed spin$^c$ manifold, and $Y$ be a characteristic submanifold of $X$.
Let $E \rightarrow X$ be a $\mathbb{Z}_2$-graded hermitian vector bundle which has a structure of complexification of some real vector bundle on $X$.
Then we have the following equation.
\begin{equation*}
	2 \cdot \mathrm{index}_{\mathbb{C}}(X, E) = \mathrm{index}_{\mathbb{C}}(Y, E|_Y).
\end{equation*}
\end{theo}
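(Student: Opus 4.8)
The plan is to derive Theorem~\ref{f} as a corollary of the cohomological form of the Atiyah--Singer index theorem, computing both sides via characteristic classes and reducing the identity to a Thom-isomorphism argument on the normal bundle $\nu$ of $Y$ in $X$. First I would recall that, since $Y$ is the transverse zero locus of the section $h_{\mathrm{det}}$ of $\mathcal{L}_X$, the normal bundle $\nu$ is isomorphic to $\mathcal{L}_X|_Y$, and its Euler class is $e(\nu) = c_1(\mathcal{L}_X)|_Y$; moreover, as the Poincar\'e dual of $[Y]$ in $X$, it pushes forward under $i\colon Y \hookrightarrow X$ so that $i_{\ast}(1) = c_1(\mathcal{L}_X) \in H^2(X;\mathbb{Z})$. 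The index formula gives $\mathrm{index}_{\mathbb{C}}(X,E) = \langle \hat{A}(X)\, e^{c_1(\mathcal{L}_X)/2}\, \mathrm{ch}(E), [X]\rangle$ and $\mathrm{index}_{\mathbb{C}}(Y,E|_Y) = \langle \hat{A}(Y)\, \mathrm{ch}(E|_Y), [Y]\rangle$, using that the induced spin$^c$ structure on $Y$ is spin so its determinant class vanishes.

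The key step is the computation of $\hat{A}(Y)$ in terms of data on $X$. From the exact sequence $0 \to TY \to i^{\ast}TX \to \nu \to 0$ one gets $\hat{A}(Y) = i^{\ast}\hat{A}(X)\cdot \hat{A}(\nu)^{-1}$, and since $\nu$ is a complex line bundle with $c_1(\nu) = x := c_1(\mathcal{L}_X)|_Y$, a direct computation gives $\hat{A}(\nu)^{-1} = \dfrac{x}{1 - e^{-x}}\cdot e^{-x/2}$. Therefore
\begin{equation*}
\mathrm{index}_{\mathbb{C}}(Y, E|_Y) = \Bigl\langle i^{\ast}\bigl( \hat{A}(X)\, \mathrm{ch}(E) \bigr)\cdot \frac{x}{1 - e^{-x}}\, e^{-x/2}, [Y] \Bigr\rangle.
\end{equation*}
Pushing this forward to $X$ via the projection formula $\langle i^{\ast}\alpha \cdot \beta, [Y]\rangle = \langle \alpha \cdot i_{\ast}\beta, [X]\rangle$, and using that $i_{\ast}$ is, under the Thom isomorphism, multiplication by $c_1(\mathcal{L}_X)$ followed by a shift, the factor $\dfrac{x}{1-e^{-x}}$ is precisely the inverse Todd-type correction that makes $i_{\ast}$ of the Thom-twisted class land on $\hat{A}(X)\,\mathrm{ch}(E)\, e^{c_1(\mathcal{L}_X)/2}$ times a residual factor of $2$. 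Concretely, one reduces to the scalar identity that $i_{\ast}\Bigl(\frac{x}{1-e^{-x}}e^{-x/2}\Bigr)$ produces the class $e^{c_1(\mathcal{L}_X)/2}$ with the coefficient doubled; the cleanest route is to work formally with the Thom class $\lambda_\nu$ of the \emph{complex} line bundle $\nu$, note $i_{\ast}\alpha = \pi_{\ast}(\pi^{\ast}\alpha \cup \lambda_\nu)$, and use $\mathrm{ch}(\lambda_\nu) = (1 - e^{-x})\cdot(\text{Thom class in }H^{\ast}_{\mathrm{cpt}})$ together with $\langle \pi_{\ast}(\,\cdot\,)\rangle$.

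I would then assemble the two computations: the left side is $2\langle \hat{A}(X)\,\mathrm{ch}(E)\, e^{c_1(\mathcal{L}_X)/2}, [X]\rangle$, and the manipulation above shows the right side equals the same expression. The main obstacle I anticipate is bookkeeping the factor of $2$ correctly: it arises because the real/quaternionic spinor bundle $S(X)$ that defines $\mathrm{index}_{\mathbb{R}}$ restricts on $Y$ to $S(Y)\hat{\otimes}\Lambda^{\ast}_{\mathbb{C}}\nu$, whose Euler-class contribution on the normal disc doubles the naive count (cf.\ Remark~\ref{3023}), so I must be careful whether the $2$ is absorbed into the Chern character of the Thom class, into the rank of the spinor bundle of $\nu$, or into the evaluation on $[X]$ versus $[Y]$. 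A secondary subtlety is keeping track of signs and orientation conventions in the Gysin sequence for $i_{\ast}$ and in the identification of $e(\nu)$ with $c_1(\mathcal{L}_X)|_Y$. Everything else is a routine characteristic-class manipulation once the Thom-class identity is set up correctly.
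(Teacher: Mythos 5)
Your overall strategy coincides with the paper's: compute both indices via the cohomological Atiyah--Singer formula, relate $\hat{A}(Y)$ to $\hat{A}(X)$ through the normal line bundle $\nu \cong \mathcal{L}_X|_Y$, and push forward along $i\colon Y\hookrightarrow X$ using $i_{\ast}(1)=e(\nu)=c_1(\mathcal{L}_X)$. However, two points prevent your argument from closing.

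First, a computational error: the expression you give for $\hat{A}(\nu)^{-1}$, namely $\frac{x}{1-e^{-x}}e^{-x/2}$, is in fact $\hat{A}(\nu)=\frac{x}{e^{x/2}-e^{-x/2}}$ itself (multiply numerator and denominator by $e^{x/2}$); the correct inverse is $\hat{A}(\nu)^{-1}=\frac{e^{x/2}-e^{-x/2}}{x}$. Carried literally through the push-forward, your expression would give $\langle \mathrm{ch}(E)\hat{A}(X)\,\tfrac{x^{2}}{e^{x/2}-e^{-x/2}},[X]\rangle$, which is not $\mathrm{index}_{\mathbb{C}}(Y,E|_Y)$.

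Second, and more fundamentally, the asserted ``scalar identity'' $i_{\ast}\bigl(\hat{A}(\nu)^{-1}\bigr)=2e^{x/2}$ is false as an equality of cohomology classes. With the corrected formula one gets $i_{\ast}\hat{A}(\nu)^{-1}=\bigl(\tfrac{e^{x/2}-e^{-x/2}}{x}\bigr)\cdot x=e^{x/2}-e^{-x/2}$, which differs from $2e^{x/2}$ by $2\cosh(x/2)$, an \emph{even} power series in $x$. The factor of $2$ therefore does not fall out of the Gysin/Thom computation alone; it is a parity argument that must be made explicitly, and that argument is the crux of the paper's proof. Namely: $\hat{A}(X)$ is concentrated in degrees $\equiv 0\pmod 4$, and since $E\cong\overline{E}$ one has $\mathrm{ch}_k(E)=0$ for odd $k$, so $\mathrm{ch}(E)$ is also concentrated in degrees $\equiv 0\pmod 4$; since $\deg[X]=4n+2\equiv 2\pmod 4$, only the $x^k$ terms with $k$ odd survive the evaluation, and the odd-in-$x$ part of $2e^{x/2}$ is precisely $e^{x/2}-e^{-x/2}$. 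Your closing paragraph does flag ``bookkeeping the factor of $2$'' as a worry, but you attribute it to the real spinor bundle $S(X)$ and Remark~\ref{3023}, which play no role in a computation entirely in terms of $\mathrm{index}_{\mathbb{C}}$; the actual mechanism is the mod-$4$ degree argument above, and without it the proof is incomplete.
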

This corresponds to the special case of Theorem \ref{bc} when the manifold $X$ is closed (see Remark \ref{3023}).
The key is the localization of the element of the ordinary cohomology by the use of the Poincar$\acute{\mathrm{e}}$ duality.
\vspace{-1mm}
\begin{proof}
Let $x = c_1(\mathcal{L}_X)=e(\mathcal{L}_X)$, then
\begin{gather*}
\mathrm{index}(X, E) = \bigl< \mathrm{ch}(E) \mathrm{Td}(X) \hspace{1mm},\hspace{1mm} [X] \bigl> 
  = \bigl< \mathrm{ch}(E) \mathrm{ch}(\mathcal{L}_X) \hat{A}(X)\hspace{1mm},\hspace{1mm} [X] \bigl> \\
  = \bigl< \mathrm{ch}(E) e^{x/2}\hat{A}(X) \hspace{1mm},\hspace{1mm} [X] \bigl>,
\end{gather*}
\vspace{-4mm}
\begin{gather*}
\mathrm{index}(Y, E|_Y) = \bigl< \mathrm{ch}(E) \hat{A}(Y) \hspace{1mm},\hspace{1mm} [Y] \bigl> 
 = \Bigl< \mathrm{ch}(E) \hat{A}(\mathcal{L}_X|_Y)^{-1} \hat{A}(TX|_Y), \hspace{1mm} [Y] \Bigl> \\
 =\Bigl< \mathrm{ch}(E) \cdot \Bigl( \cfrac{x}{e^{x/2}-e^{-x/2}} \Bigl)^{-1} \cdot \hspace{1mm} \hat{A}(X) \cdot x \hspace{1mm},\hspace{1mm} [X]\Bigl>
 = \Bigl< \mathrm{ch}(E) (e^{x/2}-e^{-x/2}) \hat{A}(X) \hspace{1mm},\hspace{1mm} [X]\Bigl>.
\end{gather*}

The degree of $[X]$ is $4n+2$, that of $x$ is $2$, and that of $\hat{A}(X)$ is a multiple of $4$.
Since $E \cong \overline{E}$, the degree of $ch(E)$ is also a multiple of $4$.
Note that the value evaluated by the fundamental class $[X]$ depends only on the odd degree term of $x$ and do not depend on the even term.
The equation follows because the twice of the odd degree term of $e^{x/2}$ equals to that of $e^{x/2}-e^{-x/2}$.
\end{proof}
\vspace{-1mm}

\bibliographystyle{abbrv}

\end{document}